\theoremstyle{plain}
\newtheorem{theorem}{\bf Theorem}[section]
\newtheorem{lemma}[theorem]{\bf Lemma}
\newtheorem{claim}[theorem]{\bf Claim}
\newtheorem{cor}[theorem]{\bf Corollary}
\newtheorem{conj}[theorem]{\bf Conjecture}
\newtheorem{const}[theorem]{\bf Construction}
\newtheorem{nota}[theorem]{\bf Notation}
\newtheorem{question}[theorem]{\bf Question}
\newtheorem{result}[theorem]{\bf Result}
\newcommand{\nn}{\mathbb{N}}
\newcommand{\zz}{\mathbb{Z}}
\title{The extensible No-Three-In-Line problem}
\author{Dániel T. Nagy\thanks{Alfréd Rényi Institute of Mathematics. The author is partially supported by NKFIH grants FK 132060 and PD 137779 and by the J\'anos Bolyai Research Fellowship of the Hungarian Academy of Sciences.
		E-mail: {\tt nagydani@renyi.hu}}
	\and Zolt\'an L\'or\'ant Nagy\thanks{ELTE Linear Hypergraphs Research Group and ELTE GAC  Research Group,
		E\"otv\"os Lor\'and University, Budapest, Hungary. The author is supported by the Hungarian Research Grant (NKFI)  PD  134953 and K 124950.  	E-mail: {\tt nagyzoli@cs.elte.hu}}
	\and Russ Woodroofe\thanks{University of Primorska, Koper, Slovenia.  The author is supported in part by the Slovenian Research Agency (ARRS) research program P1-0285 and research projects J1-9108, N1-0160, J1-2451, and J3-3003.
		E-mail: {\tt russ.woodroofe@famnit.upr.si } Webpage: \url{https://osebje.famnit.upr.si/\string~russ.woodroofe/}}
}
\date{}
\begin{document}
	\maketitle

	\begin{abstract} 
		
		The classical No-Three-In-Line problem seeks the maximum number of
		points that may be selected from an $n\times n$ grid while avoiding
		a collinear triple. The maximum is well known to be linear in $n$.
		Following a question of Erde, we seek to select sets of large density
		from the infinite grid $\zz^{2}$ while avoiding a collinear triple. We show the existence of such
		a set which contains $\Theta(n/\log^{1+\varepsilon}n)$ points in $[1,n]^{2}$ for all $n$, where $\varepsilon>0$ is an arbitrarily small real number.
		We also give computational evidence suggesting that a set of lattice points may exist
		that has at least $n/2$ points on every large enough $n\times n$ grid.
		\medskip
		
		Keywords:   no-three-in-line, collinear triples, square lattice
	\end{abstract}

	\section{Introduction}
	A set of points in the plane are said to be in \emph{general position} if no three of the points lie on a common line.  Motivated by a  problem concerning the placement of chess pieces, Dudeney \cite{dudeney1958amusements} asked how many points may be placed in an $n \times n$ grid so that the points are in general position.  This No-Three-In-Line problem has received considerable attention: for history and background, we refer to the book of Brass, Moser, and Pach \cite{Brass2005} and that of Eppstein  \cite{Eppstein:2018}; see also \cite{por2007no} for the problem in a higher dimensional setting.
	For an upper bound, it is straightforward to see that at most $2n$ points may so be placed.  For rather small $n$, several examples have been constructed where the theoretical bound $2n$ can be attained, see e.g. \cite{anderson1979update, flammenkamp1998progress}. However, it is still an open problem to determine the answer for large $n$.
	
	Joshua Erde proposed
	the following question at the Third Southwestern German Workshop on
	Graph Theory.
	\begin{question}
		\label{que:Erde}Suppose that  $S\subseteq\zz^{2}$ is a set of grid points in
		general position. Is it true that 
		\[
		\lim\inf\frac{\left|S\cap[1,n]^{2}\right|}{n}=0?
		\]
		
	\end{question}
	
	The purpose of this paper is to give evidence suggesting that the
	answer to the question may be ``no". 
	
	While it is unknown for larger $n$ whether the upper bound $2n$ is achievable in the $n\times n$ grid,
	there are several constructions where the size of the set is a smaller multiple of
	$n$. The earliest of these is due to Erd\H{o}s (appearing in a paper
	published by Roth \cite{Roth:1951}), and uses the \emph{modular parabola},
	consisting of the points $(i,i^{2})\mod p$. If $n=p$ is a prime
	number, then this yields $n$ points in general position in $\zz^{2}\cap[1,n]^{2}$.
	If $n$ is not prime, then taking $p$ to be the largest prime before
	$n$ yields $n-o(n)$ points in general position.
	
	The best  known  general construction for the No-Three-In-Line problem is due to Hall, Jackson, Sudbery, and Wild \cite{Hall/Jackson/Sudbery/Wild:1975}.  Their construction places points on a hyperbola $xy = k \pmod p$, where $p$ is a prime slightly smaller than $n/2$, and yields $\frac{3}{2}n - o(n)$ points in general position.  
	
	Related problems in projective geometry are also of importance. For a given set of points, a line is called a secant if it intersects the point set in at least two points. Point sets having no secants of size larger than $2$ are called arcs. Following earlier work of Bose \cite{bose1947mathematical}, Segre showed that for odd $q$, every arc of $q+1$ points in Desarguesian projective geometry $PG(2,q)$
	is a conic \cite{Segre:1955} (see also \cite{Segre:1955a}). Arcs
	and higher dimensional analogues have abundant applications in cryptography
	and coding theory \cite{ball2005bounds,ball2020arcs}. There are also
	connections between the No-Three-In-Line problem and planar drawings
	of graphs \cite{Wood:2005}.
	
	Our aim in this paper is to give a bound on the growth rate of $|S\cap [1,n]^2|$. 
	Thus, in contrast with the finite grid case, we need $S$ to be dense in every square $[1,n]^2$.
	
	A trivial lower bound follows from the parabola construction $\{(x,x^2): x\in \mathbb{Z}^+\}$, giving $$|S\cap [1,n]^2|=\Omega(n^{1/2}).$$
	The constructions of Erd\H{o}s and of Hall, Jackson, Sudbery, and Wild
	give large point sets in general position in any $n\times n$ grid,
	but rely heavily on choosing a prime based on $n$. These constructions
	do not straightforwardly generalize to an infinite set, as required
	for Question~\ref{que:Erde}. Payne and Wood in \cite{Payne/Wood:2013}
	give a probabilistic construction (which can be turned into a probabilistic
	algorithm, as observed in \cite[Algorithm 9.22]{Eppstein:2018}),
	but their techniques also rely on $n$ being fixed. 
	
	Thus, Question~\ref{que:Erde} asks whether there are large sets
	of points in general position in the $n\times n$ grid, which can
	be extended nicely to larger sets of such points in larger grids (the
	\emph{Extensible No-Three-In-Line problem} of the title). This is
	connected closely to greedy approaches to the No-Three-In-Line problem,
	in which one seeks to build a large set of points in general position
	by adding points one-by-one according to some simple rubric. Although
	greedy approaches often appear to work reasonably well at building
	large point sets in computer experiments, they do not seem to be easy
	to analyze. 
	
	A subset $S$ in general position of points from some universe $U$
	is said to be \emph{saturated} if the addition of any further point from
	$U$ destroys the general position property. As greedy algorithms
	often add points until no more points can be added, they may result
	in saturated subsets in general position. Saturated subsets were considered
	by Adena, Holton, and Kelly in \cite{Adena/Holton/Kelly:1974}, and
	Martin Gardner asked a related question in his ``Mathematical Games''
	column \cite{Gardner:1976}. It is straightforward to see that any
	saturated subset has at least $\Omega(\sqrt{n})$ points. 
	
	
	Recently, Aichholzer, Eppstein, and Hainzl found a significantly improved
	lower bound on saturated subsets.
	
	\begin{theorem}[Aichholzer, Eppstein, and Hainzl \cite{Aichholzer/Eppstein/Hainzl:2023}]
		\label{thm:SaturatedLB} If $S$ is a saturated subset of points
		in general position from an $n\times n$ grid, then $\left|S\right|=\Omega(n^{2/3})$.
	\end{theorem}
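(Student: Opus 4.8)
The plan is to bound $m := |S|$ from below by a double-counting argument between the grid points of $[1,n]^2$ and the lines spanned by pairs of points of $S$. Since $S$ is in general position, the $\binom{m}{2}$ pairs of points of $S$ span $\binom{m}{2}$ \emph{distinct} lines, each meeting $S$ in exactly two points; write $\cL$ for this family. The hypothesis that $S$ is saturated says precisely that every grid point of $[1,n]^2$ not in $S$ is collinear with two points of $S$, hence lies on some line of $\cL$. So, writing $g(\ell)$ for the number of grid points of $[1,n]^2$ on $\ell$, we get $\sum_{\ell \in \cL} g(\ell) \ge n^2 - m$. The whole task then reduces to the matching upper bound $\sum_{\ell \in \cL} g(\ell) = O(n\, m^{3/2})$; combined with the elementary fact $m \le 2n$ (so that $n^2 - m \ge n^2/2$), this forces $n^2 = O(n\, m^{3/2})$ and therefore $m = \Omega(n^{2/3})$.

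To bound $\sum_{\ell\in\cL} g(\ell)$ I would group the lines of $\cL$ by their primitive direction vector $(a,b)$ (taken up to sign) and set $D(a,b) := \max(|a|,|b|)$. Two observations do the work. First, any line with primitive direction $(a,b)$ carries at most $2n/D(a,b)$ points of $[1,n]^2$, since consecutive lattice points on it differ by $(a,b)$, so $k$ of them project to a segment of length $(k-1)D(a,b) < n$ on one coordinate axis. Second --- and this is what beats the trivial $\sqrt{n}$ estimate --- distinct parallel lines of $\cL$ are disjoint and each contains two points of $S$, so at most $\lfloor m/2 \rfloor$ lines of $\cL$ share any one direction, while the total over all directions is $\sum_{(a,b)} \ell_{(a,b)} = \binom m 2$, where $\ell_{(a,b)}$ counts the lines of $\cL$ in direction $(a,b)$. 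Since there are only $O(D)$ primitive directions with $D(a,b) = D$, splitting the sum at a threshold $D_0$ (bounding the narrow directions $D(a,b)\le D_0$ crudely and pulling $1/D(a,b)\le 1/D_0$ out of the rest) gives
\[
\sum_{\ell \in \cL} g(\ell) \;\le\; 2n \sum_{(a,b)} \frac{\ell_{(a,b)}}{D(a,b)} \;\le\; 2n\Bigl( \sum_{D=1}^{D_0} O(D)\cdot\frac{m/2}{D} \;+\; \frac{1}{D_0}\binom m 2 \Bigr) \;=\; O\!\bigl( n(D_0\, m + m^2/D_0) \bigr),
\]
and choosing $D_0 = \lceil \sqrt m \rceil$ balances the two terms to yield the desired $O(n\, m^{3/2})$, completing the deduction $m = \Omega(n^{2/3})$.

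The step I expect to be the main obstacle is exactly this middle estimate: one must resist the lossy bound $g(\ell) \le n$ and instead exploit that the ``thick'' lines (those with close to $n$ grid points) come from only $O((n/k)^2)$ directions if they carry $\ge k$ points, whereas each direction hosts at most $m/2$ lines of $\cL$. Getting the bookkeeping right --- the count of primitive directions of a given width, together with the disjointness of parallel spanned lines that caps $\ell_{(a,b)}$ at $m/2$ --- is what makes the two terms balance at exponent $2/3$ rather than degrading back to $1/2$.
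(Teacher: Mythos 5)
The paper does not prove this statement at all: Theorem~\ref{thm:SaturatedLB} is quoted from Aichholzer, Eppstein, and Hainzl as an external result, so there is no in-paper argument to compare yours against. Your proof is correct and self-contained, and it follows the natural (and essentially the original) route: saturation forces the $\binom{m}{2}$ lines spanned by $S$ (each containing exactly two points of $S$, by general position) to cover all $n^2-m$ remaining grid points; a line of primitive direction $(a,b)$ holds at most $O(n/\max(|a|,|b|))$ grid points; parallel spanned lines are disjoint, capping each direction class at $m/2$ lines while the total is $\binom{m}{2}$; and splitting at $D_0=\lceil\sqrt{m}\rceil$ balances the two contributions to give $\sum_{\ell\in\mathcal{L}} g(\ell)=O(n\,m^{3/2})$, whence $m=\Omega(n^{2/3})$ (using $m\le 2n$, or simply $m\le n^{2}/2$, to keep $n^2-m\ge n^2/2$). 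All the individual steps check out: each non-selected grid point is indeed collinear with two points of $S$ by the definition of saturation, the count of primitive directions with $\max(|a|,|b|)=D$ is $O(D)$, and the lattice-point bound per line is valid since spanned lines have $D\le n-1$. This is exactly the kind of argument that beats the trivial $\Omega(\sqrt n)$ bound, and I see no gap.
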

	
	It follows straightforwardly that there is an infinite subset $S\subseteq\zz^{2}$
	in general position so that $\left|S\cap[1,n]^{2}\right|=\Omega(n^{2/3})$.
	It is worth remarking that the result of Theorem~\ref{thm:SaturatedLB}
	is markedly different from the situation in a projective plane: for
	any sufficiently large projective plane having $q^{2}+q+1$ points,
	Kim and Vu \cite{kim2003small} showed that there is a saturated  (or maximal) arc
	of order $\sqrt{q}\cdot\log^{c}q$ (for some fixed $c$). This indicates that completing a given structure to gain a dense set in a larger square is difficult in general.
	

	
	We show that the asymptotic growth of the size of the point set for the problem on the infinite grid can, in fact, be almost linear.
	
	\begin{theorem} \label{thm:main}
		For any $\varepsilon>0$, it is possible to construct
		a set $S\subseteq\zz^{2}$ of grid points in general position with
		$$\left|S\cap[1,n]^{2}\right|=\Theta(n/\log^{1+\varepsilon}n).$$  In particular,
		it holds that 
		\[
		\lim\inf\frac{\left|S\cap[1,n]^{2}\right|}{n/\log^{1+\varepsilon}n}>0.
		\]
	\end{theorem}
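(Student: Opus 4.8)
\emph{The plan} is to build $S$ scale by scale --- randomly within each dyadic block, then repairing collinearities --- in the spirit of the probabilistic constructions mentioned above, but arranged to be extensible. Put $N_k=2^{k}$ and let $A_k=[1,N_k]^{2}\setminus[1,N_{k-1}]^{2}$ be the $k$-th dyadic annulus; for $q=(q_1,q_2)$ write $h_q=\max(q_1,q_2)$, the index of the ``$L$-shell'' $\{q':h_{q'}=h_q\}$ through $q$. Fix a small constant $c_0=c_0(\varepsilon)>0$. Independently for each $q\in A_k$, place $q$ in a candidate set $T_k^{0}$ with probability $\rho_q=c_0\bigl(2h_q(\log h_q)^{1+\varepsilon}\bigr)^{-1}$, so that each $L$-shell contributes $\approx c_0(\log h)^{-(1+\varepsilon)}$ points in expectation, $\mathbb{E}|T_k^{0}|\asymp c_0\,2^{k}k^{-(1+\varepsilon)}$, and $\mathbb{E}\,\bigl|\bigl(\bigcup_j T_j^{0}\bigr)\cap[1,n]^{2}\bigr|\asymp c_0\, n(\log n)^{-(1+\varepsilon)}$ for every $n$ (the partial sums grow essentially linearly across each annulus, which is what makes the estimate hold for all $n$ and not merely powers of $2$). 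Now process $k=1,2,\dots$: given a set $S_{k-1}$ in general position, delete one point from each collinear triple of $S_{k-1}\cup T_k^{0}$ that meets $T_k^{0}$, obtaining $T_k\subseteq T_k^{0}$ with $S_k:=S_{k-1}\cup T_k$ in general position, and finally set $S:=\bigcup_k T_k$.

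\emph{Counting the triples to repair.} Adding $A_k$ can create three types of forbidden triples: (i) three points of $T_k^{0}$; (ii) two of $T_k^{0}$ and one of $S_{k-1}$; (iii) one of $T_k^{0}$ and two of $S_{k-1}$. The basic input is that a line $\ell$ meeting $A_k$ whose primitive direction has larger coordinate $D$ satisfies $|\ell\cap A_k|=O(2^{k}/D)$ and $\sum_{q\in \ell\cap A_k}\rho_q=O\bigl(c_0\,D^{-1}k^{-(1+\varepsilon)}\bigr)$; the factor $k^{-(1+\varepsilon)}$ rather than $k^{-\varepsilon}$ is precisely because $A_k$ spans only one dyadic block, so that $\int_{N_{k-1}}^{N_k}\!\frac{dt}{t(\log t)^{1+\varepsilon}}\asymp k^{-(1+\varepsilon)}$. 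There are $O(D)$ primitive directions with larger coordinate $D$, and $O(2^{k}D)$ lines of a given such direction meet $A_k$; summing accordingly bounds (i) and (ii) each by $O\bigl(c_0^{3}2^{k}k^{-(2+3\varepsilon)}\bigr)=O\bigl(c_0^{2}k^{-(1+2\varepsilon)}\bigr)\cdot\mathbb{E}|T_k^{0}|$, which is negligible. Type (iii) is the delicate one: its expectation equals $\sum_{\{p,p'\}\subseteq S_{k-1}}\ \sum_{q\in \ell_{pp'}\cap A_k}\rho_q=O\Bigl(c_0 k^{-(1+\varepsilon)}\!\!\sum_{\{p,p'\}\subseteq S_{k-1}}\!\!D(p,p')^{-1}\Bigr)$, where $D(p,p')$ is the larger coordinate of the primitive direction of $p-p'$. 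We bound the last sum using an inductive hypothesis that $S_{k-1}$ is \emph{spread out}, namely $|S_{k-1}\cap B|=O\bigl(c_0\,s(\log s)^{-(1+\varepsilon)}\bigr)$ for every axis-aligned $s\times s$ box $B\subseteq[1,N_{k-1}]^{2}$; this yields $O(c_0^{2}r^{2})$ pairs of $S_{k-1}$ at Euclidean distance $\asymp r$ (summing the per-annulus contributions converges because the exponent is $1+\varepsilon$), hence $\sum_{\{p,p'\}}D(p,p')^{-1}=O(c_0^{2}2^{k})$ and type (iii) is $O(c_0^{2})\cdot\mathbb{E}|T_k^{0}|$. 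Choosing $c_0$ small then makes the expected number of forbidden triples at step $k$ at most, say, $\tfrac1{100}\mathbb{E}|T_k^{0}|$.

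\emph{Concentration and conclusion.} Chernoff bounds control $|T_k^{0}|$ and the $L$-shell counts, Markov's inequality controls the number of forbidden triples at step $k$, and the density upper bound for $S_k$ follows from that for $S_{k-1}$ plus a Chernoff bound for the new points. A union bound over all $k$ and all dyadic sub-scales shows that with probability bounded away from $0$ the following holds for every $k$: $|T_k^{0}|\asymp c_0 2^{k}k^{-(1+\varepsilon)}$, at most $\tfrac1{50}|T_k^{0}|$ points are deleted at step $k$, and $S_k$ stays spread out with $|S_k\cap[1,t]^{2}|\asymp c_0\,t(\log t)^{-(1+\varepsilon)}$ for all $t\le N_k$. (To stop the deletions from locally depleting the set, remove from each forbidden triple the point lying in the most populated $L$-shell, or use any balanced deletion rule.) For a realisation with these properties, $S=\bigcup_k T_k\subseteq\zz^{2}$ is in general position and $\bigl|S\cap[1,n]^{2}\bigr|=\Theta\bigl(c_0\,n(\log n)^{-(1+\varepsilon)}\bigr)=\Theta\bigl(n(\log n)^{-(1+\varepsilon)}\bigr)$ for all large $n$, hence, after adjusting constants, for all $n$.

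\emph{Where the difficulty lies.} The technical heart is the bound on type-(iii) triples together with its inductive ``spread-out'' hypothesis: one must show that a freshly placed point is unlikely to lie on a line already determined by two earlier points, which demands quantitative control of how many pairs of $S_{k-1}$ span a nearly axis-parallel (small-denominator) direction --- control that then has to be re-established for $S_k$ after the deletions. Dovetailing this with the concentration bookkeeping over all scales, and arranging the deletion step to preserve simultaneously the general-position property and the density profile, is the main work; the exponent $1+\varepsilon$ rather than $1$ is used only to give convergence, with room to spare, of the sums over scales in the pair count.
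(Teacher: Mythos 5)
Your route (a random set with density profile $\rho_q\asymp c_0/(h_q\log^{1+\varepsilon}h_q)$, repaired annulus by annulus) is genuinely different from the paper's (explicit modular parabolas placed in small squares along the concave curve $x/\log^{\varepsilon}x$, with an averaging choice of the parabola parameters), but the step you yourself single out as the heart of the matter --- the type-(iii) triples --- is not established, and the specific deduction you give is incorrect. From the spread-out hypothesis $|S_{k-1}\cap B|=O\bigl(c_0 s(\log s)^{-(1+\varepsilon)}\bigr)$ the number of pairs at Euclidean distance $\asymp r$ is only bounded by $O\bigl(|S_{k-1}|\cdot c_0 r(\log r)^{-(1+\varepsilon)}\bigr)=O\bigl(c_0^{2}2^{k}k^{-(1+\varepsilon)}\,r(\log r)^{-(1+\varepsilon)}\bigr)$, which for $r\ll 2^{k}$ (say $r=2^{k/2}$) is far larger than the $O(c_0^{2}r^{2})$ you assert. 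Worse, even a correct distance count does not control $\sum_{\{p,p'\}}D(p,p')^{-1}$, because $D(p,p')$ can be much smaller than $|p-p'|$: neither the box counts nor general position (which only forbids a third point on a line) prevents a constant fraction of pairs from concentrating on a few low-denominator directions --- for instance a set symmetric about a vertical axis has $|S_{k-1}|/2$ horizontal pairs each contributing $1$ to the sum --- and under your stated hypotheses the sum can be as large as roughly $|S_{k-1}|^{3/2}$, which makes the expected number of type-(iii) triples swamp $\mathbb{E}|T_k^{0}|$ rather than be $O(c_0^{2})\cdot\mathbb{E}|T_k^{0}|$. What is actually needed is an inductive bound on the number of pairs of $S_{k-1}$ per primitive direction (equivalently per slope denominator $q$); this is precisely the quantity the paper controls structurally, via Claim~\ref{claim:nagyon_para} (a modular parabola has at most one pair per prescribed slope and displacement, hence $O(p_m/q)$ pairs of denominator $q$) together with the slope restrictions of Lemma~\ref{lem:meredek} and Corollary~\ref{cor:meredek}. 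A truly random set does satisfy such a directional pair bound with high probability, but then that stronger property --- not the box counts --- must be the induction hypothesis and must be re-verified for $S_k$ after adjoining $T_k$ and performing the deletions; you flag this as ``the main work'' but do not carry it out.

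A secondary but real problem is the probabilistic bookkeeping at the end: Markov's inequality gives only a constant failure probability at each scale $k$, and a union bound over infinitely many scales then yields nothing; you would need summable failure probabilities (second-moment or concentration bounds for the triple counts, or slack factors growing with $k$ that still keep the deletions a vanishing fraction of $|T_k^{0}|$), and in addition a lower-bound argument valid in every window $[1,n]^{2}$, not only at $n=2^{k}$, after the deletions. These look repairable with more work, but as written the argument has a genuine gap at the type-(iii) estimate and at the union bound. It is worth noting how the paper sidesteps both issues deterministically: concavity of $x/\log^{\varepsilon}x$ (Lemma~\ref{lem:consecutive2}) guarantees no line meets three blocks, so only the red points and blue pairs need counting, and averaging over the $p_n^{2}$ choices of $(a_n,b_n)$ replaces all concentration arguments.
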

	
	The main ingredients of the construction underlying Theorem~\ref{thm:main} are as follows.  We place separated copies of the parabola construction of Erdős along the curve $x/\log^\varepsilon x$.  Specifically, for each value $x=2^n$, we consider a square placed near the point $(2^n, n^\varepsilon)$ having side length a small multiple of $2^n/n^{1+\varepsilon}$.  We choose a suitable prime $p_n$ for each integer $n$, and place a translated copy of Erdős's parabola construction with respect to $p_n$ in the square. Finally, we delete those points from each such parabola that would form a collinear triple with points to their left.
	
	By concavity, any line intersects the curve $x/\log^\varepsilon$ in at most two points.  If we choose the squares in the construction to be small enough, then (as we will see) a line intersects at most two of the squares.
	Thus to avoid collinear triples, it is enough to delete a point from each line which intersects the previously defined point set in one point and the parabola in the $n$th square in two points, or vice versa. 
	By bounding from above the  number of deleted points, we obtain $\Theta(\frac{N}{\log^{1+\varepsilon} N})$ lattice points in general position for each $[1,N]^2$, verifying Theorem 1.3.
	
	\paragraph{}
	In addition to the construction of Theorem~\ref{thm:main}, we also 
	have some computational results. The \emph{lexicographic greedy
		construction} on a subset $U$ of $\nn^{2}$ proceeds from left to
	right across $U$. On each vertical line, we add to $S$ the lowest
	point that is not on the same line with any two already-placed points
	(if such a point exists). The lexicographic greedy construction on
	the (infinite height) grid has been previously examined in OEIS sequences
	A236335 and A236266 \cite{OEIS}. We consider the lexicographic greedy
	construction over the triangular region consisting of the points with
	$y\leq x$ in the first quadrant of $\zz^{2}$. In computer experiments, this
	appears to stably yield slightly more than $0.8n$ points in general
	position in $[1,n]^{2}$. We also consider the variant where we only
	take points on vertical lines having even $x$-intercept. In computer experiments,
	this even variant appears to find points without fail, yielding exactly
	$n$ points in general position in $[1,2n]^{2}.$ 
	
	The organisation of the paper is as follows. We finish this section by presenting the outline of Theorem~\ref{thm:main}. In Section 2 we introduce the notation we wish to use throughout the paper and recall some background results. Section 3 is devoted to the proof of Theorem~\ref{thm:main}. Finally, in Section 4 we describe computer experiments with the lexicographic greedy construction, which suggest that the answer for Question \ref{que:Erde} is ``no".  Motivated by these experiments, we formulate novel conjectures.
	
	\section{Preliminaries}
	
	\begin{nota} When speaking about the lattice points of a grid, 
		$[1,N]$ denotes the integers in the closed interval. 
	\end{nota}
	
	We will use (without further reference) the following well-known results from analysis and number theory.
	
	\begin{result}\label{result:harmo} The first $N$ terms of the harmonic series sum up as 
		$\sum_{i=1}^N \frac{1}{k}= \ln N + O(1).$
	\end{result}
	
	\begin{result}[Baker, Harman, Pintz \cite{Baker/Harman/Pintz:2001}] There is a prime number in the interval $[x-x^{21/40},x]$ for all large enough $x$.
	\end{result}
	
	Let us recall the details of the parabola construction of Erdős discussed in the introduction.  This construction provides $n-o(n)$ points in general position in the grid $[1,n]\times [1,n]$. We introduce a slightly generalized form.
	
	\begin{const}[The general modular parabola construction] \label{const:erdos} Let 
		$\mathcal{P}$ denote a parabola $y=(x-a)^2+b \pmod {p}$ for some prime $p$ and integers $a, b$. The graph of the parabola in $[1,p]\times [1, p]$ lies in general position, similarly to the Erdős construction. Indeed, the two extra parameters $a,b$ merely translate the graph of $y=x^2 \pmod {p}$.
	\end{const}
	
	We make two observations regarding Construction~\ref{const:erdos}. The first states that two points determine a unique parabola having leading coefficient $1$. The second notices that that there is at most one point pair of the parabola with a given direction and Euclidean distance.
	
	\begin{claim}\label{claim:nagyon_para}
		\begin{enumerate}
			\item[]
			\item  Let $\mathbb{F}$ be a field, and $(x_0, y_0), (x_1, y_1)\in \mathbb{F}\times \mathbb{F}.$ Then the system of equations \[
			\left\{
			\begin{array}{ll}
				(x_0-a)^2+b=y_0\\
				(x_1-a)^2+b=y_1
			\end{array}
			\right.
			\] has a unique solution $(a,b)$, provided that $x_0\neq x_1$.
			
			\item  Let $\mathcal{P}$ be the parabola $y= (x-a)^2+b \pmod p$ on the grid $[0,p-1]\times [0,p-1] \subseteq \mathbb{Z}^2$.  Choose  any ordered  pair of  positive numbers $s,t$. There is at most one pair of points $Q_1, Q_2$ lying on $\mathcal{P}$ for which the line segment $Q_1Q_2$  has slope $s$ and  length $t$.
		\end{enumerate}
	\end{claim}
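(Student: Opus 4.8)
The plan is to prove the two parts separately, with part (1) being a routine linear-algebra computation and part (2) following from it by a counting/injectivity argument over the plane grid.

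For part (1), I would expand the two defining equations. Subtracting them eliminates $b$ and gives $(x_0-a)^2 - (x_1-a)^2 = y_0 - y_1$, which factors as $(x_0 - x_1)(x_0 + x_1 - 2a) = y_0 - y_1$. Since $x_0 \neq x_1$ and we work over a field, we may solve uniquely for $a$ (note $2 \neq 0$ is not needed: the factor $x_0-x_1$ is the invertible one, and $a$ appears linearly). Once $a$ is determined, $b = y_0 - (x_0-a)^2$ is forced. Hence the solution $(a,b)$ exists and is unique. I should remark that the field may have characteristic $2$, in which case the map $x \mapsto x^2$ is additive but still injective, and the argument above is unaffected since we never divide by $2$.

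For part (2), the key observation is that a pair $Q_1 = (x_1, (x_1-a)^2+b)$, $Q_2 = (x_2,(x_2-a)^2+b)$ on $\mathcal{P}$ — now thought of as a subset of $[0,p-1]^2 \subseteq \zz^2$ rather than a curve over $\ff_p$ — is determined, up to order, by the pair of \emph{residues} $x_1, x_2 \bmod p$ together with the vertical drop $(x_2-a)^2 - (x_1-a)^2 \bmod p$, lifted to an actual integer in $[-(p-1), p-1]$. The slope $s = \Delta y / \Delta x$ and length $t$ of the segment $Q_1Q_2$ in $\zz^2$ jointly determine the integer vector $(\Delta x, \Delta y)$ (its direction from $s$, its magnitude from $t$), hence determine $\Delta x$ and $\Delta y$ as integers. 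In particular $\Delta x = x_2 - x_1$ is a fixed nonzero integer with $|\Delta x| \le p-1$, so $x_1 \not\equiv x_2 \pmod p$. Reducing mod $p$, the residues $x_1, x_2$ satisfy $x_1 - x_2 \equiv \Delta x$ and, expanding, $(x_2 - a)^2 - (x_1 - a)^2 = (x_2 - x_1)(x_1 + x_2 - 2a) \equiv \Delta y \pmod p$; since $x_2 - x_1 \equiv \Delta x \not\equiv 0$, this pins down $x_1 + x_2 \bmod p$, and together with $x_1 - x_2 \equiv \Delta x$ this determines $x_1$ and $x_2$ as residues mod $p$ uniquely (again $2$ is invertible mod the odd prime $p$, or one simply solves the invertible system). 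As $x_1, x_2 \in [0,p-1]$, the residues are the points themselves, so at most one such pair $\{Q_1, Q_2\}$ exists.

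The only mild subtlety — and the step I would be most careful about — is the bookkeeping in passing between the modular picture and the integer picture in part (2): one must be sure that knowing $(s,t)$ recovers the integer vector $(\Delta x, \Delta y)$ exactly (here $s$ fixes the ratio and sign pattern and $t = |\Delta x|\sqrt{1+s^2}$ fixes the magnitude, so this is fine), and that the constraint $0 \le x_i \le p-1$ forces each integer coordinate to coincide with its residue mod $p$, so that a solution in residues is a solution among actual grid points. Everything else is the same displacement-to-residues computation as in part (1). Note that we do not even need the vertical drop to be consistent as an integer beyond its residue class, since the grid points are already confined to $[0,p-1]^2$; the $y$-coordinates are honest residues.
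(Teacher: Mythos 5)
Your proof is correct. Part (1) is the same computation as in the paper: subtract, factor out $x_0-x_1$, solve linearly for $a$, then $b$ is forced (and, as you note, no division by $2$ is needed). For part (2) you take a somewhat different route. The paper deduces it from part (1) by a translation argument: if two pairs on $\mathcal{P}$ had the same slope and length, one would be a translate of the other by some $(u,v)$, and then the translated pair lies on the parabola with parameters $(a+u,b+v)$ as well as on $\mathcal{P}$, so uniqueness from part (1) forces $u\equiv v\equiv 0 \pmod p$, hence $u=v=0$ on the grid $[0,p-1]^2$; this never requires inverting $2$. You instead recover the integer displacement $(\Delta x,\Delta y)$ from $(s,t)$ and solve directly for the abscissas: $\Delta x$ is invertible mod $p$, so $x_1+x_2$ is determined mod $p$, and together with $x_2-x_1=\Delta x$ this pins down $x_1,x_2$ — at the cost of dividing by $2$, so your argument needs $p$ odd. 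That is harmless here (for $p=2$ the parabola has at most two points, so the claim is vacuous, and the primes used in the paper are large), but your parenthetical ``or one simply solves the invertible system'' is not quite right in characteristic $2$, where the $2\times 2$ system in $x_1,x_2$ is singular; it would be cleaner to dispose of $p=2$ explicitly or to use the paper's translation argument, which is characteristic-free. Your care in passing between the modular and integer pictures (recovering $(\Delta x,\Delta y)$ from $(s,t)$ up to ordering, and using $0\le x_i\le p-1$ to lift residues to grid points) matches what the paper implicitly relies on.
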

	
	\begin{proof}
		To prove the first part, subtract the second equation from the first to get a linear equation in $a$, which uniquely determines $a$.  The uniqueness of $b$ now follows immediately.
		
		To prove the second part, suppose that we have four points on the parabola,  $Q_1, Q_2, R_1, R_2,$ such that $R_1R_2$ is a translate of $Q_1Q_2$.  That is, the points have the form $Q_1=(x_0, y_0), Q_2=(x_1, y_1)$ and $R_1=(x_0+u, y_0+v),  R_2=(x_1+u, y_1+v)$. Then the following equations hold: \[
		\begin{array}{ll}
			(x_0-a)^2+b=y_0\\
			(x_1-a)^2+b=y_1\\
			(x_0+u-a)^2+b=y_0+v\\
			(x_1+u-a)^2+b=y_1+v.\\
		\end{array}.
		\]
		However, the first two equations determine $(a,b)$ as in the proof of the first part, and the last two equations determine the same pair $(a,b)$ if and only if $u=v=0$.
	\end{proof}
	
	We will also frequently use the following bound throughout the proof of the main theorem.
	
	\begin{claim}\label{claim:sum} Let $\varepsilon\in [0,1]$ be a non-negative real number.  If $n>10$, then
		$$\sum_{m=1}^{n-1} \frac{2^m}{m^{1+\varepsilon}} \leq 2\cdot \frac{2^n}{n^{1+\varepsilon}}.$$
	\end{claim}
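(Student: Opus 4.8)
The plan is to first reduce to the single case $\varepsilon=1$, and then to prove that case by induction on $n$.

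For the reduction, write
\[
\sum_{m=1}^{n-1}\frac{2^m}{m^{1+\varepsilon}}=\frac{2^n}{n^{1+\varepsilon}}\sum_{m=1}^{n-1}2^{m-n}(n/m)^{1+\varepsilon}.
\]
Since $n/m\geq 1$ for every $m\leq n-1$ and $1+\varepsilon\leq 2$, we have $(n/m)^{1+\varepsilon}\leq(n/m)^2$; summing this bound yields $\sum_{m=1}^{n-1}2^m/m^{1+\varepsilon}\leq n^{1-\varepsilon}\sum_{m=1}^{n-1}2^m/m^2$, so the desired bound $2\cdot 2^n/n^{1+\varepsilon}$ follows at once once we know the $\varepsilon=1$ inequality $\sum_{m=1}^{n-1}2^m/m^2\leq 2\cdot 2^n/n^2$. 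The key point of this step is that it compares the two sides at the \emph{same} value of $\varepsilon$: bounding the left-hand side using $\varepsilon=0$ and the right-hand side using $\varepsilon=1$ separately is far too lossy, and it is precisely this manoeuvre that identifies $\varepsilon=1$ as the extremal case.

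For the $\varepsilon=1$ inequality I would induct on $n$ with base case $n=11$. The base case is a finite computation: $\sum_{m=1}^{10}2^m/m^2<32<4096/121=2\cdot 2^{11}/11^2$. For the inductive step, assuming the inequality for $n$ and adding the term $2^n/n^2$ gives $\sum_{m=1}^{n}2^m/m^2\leq 3\cdot 2^n/n^2$, so it suffices to check that $3\cdot 2^n/n^2\leq 4\cdot 2^n/(n+1)^2$, i.e., $3(n+1)^2\leq 4n^2$, i.e., $n^2-6n-3\geq 0$; the larger root of $n^2-6n-3$ is $3+2\sqrt{3}<7$, so this holds for every $n\geq 7$, and in particular throughout the induction.

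There is no substantive obstacle here, but the inequality carries very little slack: at $\varepsilon=1$ it already fails for $n=10$ (the left side is $\approx 20.88$ against $20.48$ on the right), so the hypothesis $n>10$ is genuinely needed and the base case $n=11$ must be checked explicitly rather than pushed down. Fortunately a little room ($\approx 31.1$ versus $\approx 33.9$) opens up at $n=11$, and the estimate above shows that the inductive step then preserves it comfortably.
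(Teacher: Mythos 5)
Your proof is correct and follows essentially the same route as the paper: an induction on $n$ with base case $n=11$, where the inductive step reduces to the inequality $3(n+1)^{1+\varepsilon}\leq 4n^{1+\varepsilon}$ (your $3(n+1)^2\leq 4n^2$ at the extremal $\varepsilon=1$), together with monotonicity in $\varepsilon$ to handle all $\varepsilon\in[0,1]$. The only cosmetic difference is that you apply the $\varepsilon=1$ reduction to the whole sum up front and then induct only at $\varepsilon=1$, whereas the paper checks the base case via the same monotonicity and runs the induction uniformly in $\varepsilon$.
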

	
	\begin{proof}It is easy to see that the claim holds for $n=11$ for every $\varepsilon$, since it in turn follows from the  case $\varepsilon=1$ by monotonicity. To prove for every $n>11$, one can easily apply induction. Then it is enough to verify $\frac{2+1}{2\cdot 2}\leq (\frac{n}{n+1})^{1+\varepsilon}$ to prove the inductional hypothesis.
	\end{proof}
	
	\section{Proof of the main result}\label{sec:mainproof}
	
	\subsection{Construction}
	We begin by describing our construction.
	
	The building blocks of the construction will be copies of the parabola of Construction~\ref{const:erdos}, taken over various primes $p$.  We will place these copies along the curve $x / \log^\varepsilon x$ (for some fixed small $\varepsilon$).  By concavity, no line passes through more than 2 points of this curve.  If we choose our parabolas to be small enough, then no line will pass through more than two of the copies, and we can then control the lines passing through two copies.
	
	The details are as follows.  It may be helpful to refer to Figure~\ref{fig:consecutive}.
	
	\begin{const}[Main construction]\label{const:box}
		Fix a real number $0 < \varepsilon < 1$.  Fix also a real number $c$, which will be at least $12/\varepsilon$ (see Lemma~\ref{lem:consecutive2} below).
		\begin{enumerate}  
			\item  For each sufficiently large $n$, consider an axis-parallel square
			$Q_{n}$ with sides of length $\left\lfloor \frac{2^{n}}{cn^{1+\varepsilon}}\right\rfloor $,
			and top left corner at $(2^{n},\left\lfloor \frac{2^{n}}{n^{\varepsilon}}\right\rfloor )$.
			Let $p_{n}$ be the largest prime smaller than $\left\lfloor \frac{2^{n}}{cn^{1+\varepsilon}}\right\rfloor $. 
			
			\item  In each $Q_{n}$, consider a translated copy of the parabola $(x-a_{n})^{2}+b_{n}\mod p_{n}$
			(for some choice of parameters $a_{n}$ and $b_{n}$), extending down
			and right from the top left corner. Denote this parabola as $\mathcal{P}_{n}$.
			
			\item Our point set $S$ will consist of a subset of $\bigcup_{n}\mathcal{P}_{n}$.
		\end{enumerate}
		
	\end{const}
	
	We will select the subset $S$ of $\bigcup\mathcal{P}_{n}$ iteratively.
	Assuming that we have already selected points in $\mathcal{P}_{1},\dots,\mathcal{P}_{n-1}$,
	we will select the parameters $a_{n},b_{n}$ to minimize the number
	of collinear triples spanned by points of $\mathcal{P}_{n}$ and $S\cap\left(\bigcup_{i=1}^{n-1}Q_{i}\right)$.
	We remove one point of each such collinear triple from $\mathcal{P}_{n}$,
	and add the remaining points to $S$.
	
	The floors in the construction make no difference in the asymptotic situation, and we will generally omit them to make our calculations easier to follow.

	\begin{figure}[ht]
		\centering
		\includegraphics[width=1.0\textwidth]{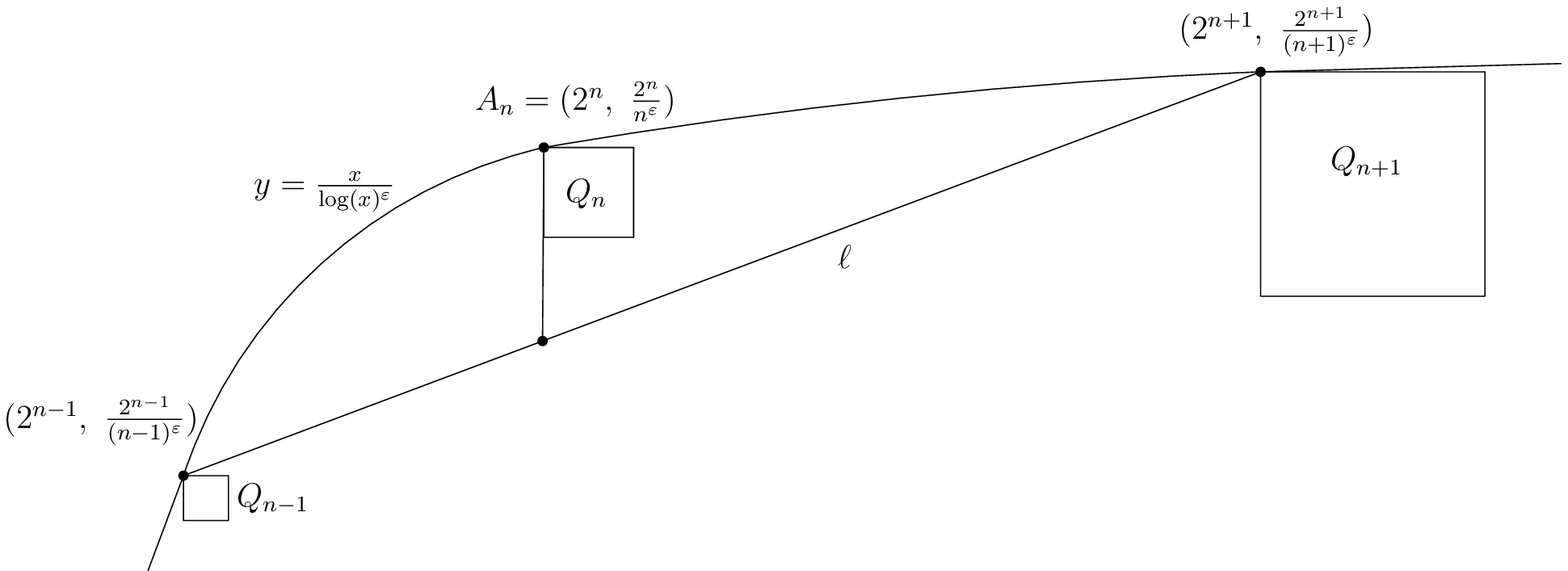}
		\caption{Illustration for Lemma~\ref{lem:consecutive2}}
		\label{fig:consecutive}
	\end{figure}
	
	\subsection{Bounding lemmas}
	Fix notation as in Construction~\ref{const:box}. We first show
	that no line intersects more than two copies of the modular parabola
	gadget.
	
	\begin{lemma}\label{lem:consecutive2}
		If $c$ is at least $c_{0}(\varepsilon)$, then no line intersects more
		than two of the squares $Q_{n}$.
	\end{lemma}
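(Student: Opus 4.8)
The plan is to argue by contradiction. Suppose a line $\ell$ meets three of the squares, say $Q_k$, $Q_m$, $Q_n$ with $k<m<n$; since the squares of Construction~\ref{const:box} are indexed by sufficiently large integers, $k,m,n$ are all large. Abbreviate the side length of $Q_j$ by $s_j=2^j/(cj^{1+\varepsilon})$ and the height of its top edge by $h_j=2^j/j^\varepsilon$ (dropping floors as usual), so that $Q_j\subseteq[2^j,2^j+s_j]\times[h_j-s_j,h_j]$ and its top-left corner $A_j=(2^j,h_j)$ lies on the curve $\gamma(x)=x/(\log_2 x)^\varepsilon$. The squares are pairwise disjoint in their $x$-projections and (for large indices) $A_k,A_m,A_n$ are increasing in both coordinates, so $\ell$ has positive slope $\sigma$. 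The key point is that $\gamma$ is strictly concave for $x$ large — one checks that $\gamma''(x)<0$ once $x>e^{1+\varepsilon}$ — so $\ell$ would have to pass very close to three points of a concave curve, which is impossible once the squares are small enough, i.e.\ once $c$ is large.

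To make this precise, set $\delta_{i,j}:=\frac{h_j-h_i}{2^j-2^i}$, the secant slope of $\gamma$ between $A_i$ and $A_j$. Because $\ell$ meets $Q_k$ and $Q_m$, picking points of $\ell$ in each square and bounding the resulting difference quotient — using $h_m-h_k\le 2^m$, $2^m-2^k\ge 2^{m-1}$, and $s_m/2^m=1/(cm^{1+\varepsilon})$ — gives
\[
\sigma\ \ge\ \frac{h_m-h_k-s_m}{2^m-2^k+s_m}\ \ge\ \delta_{k,m}-\frac{O(1)}{c\,m^{1+\varepsilon}} .
\]
Symmetrically, because $\ell$ meets $Q_m$ and $Q_n$ (now using $2^n-2^m\ge 2^m$),
\[
\sigma\ \le\ \frac{h_n-h_m+s_m}{2^n-2^m-s_m}\ \le\ \delta_{m,n}+\frac{O(1)}{c\,m^{1+\varepsilon}} .
\]
Subtracting these forces $\delta_{k,m}-\delta_{m,n}\le O(1)/(c\,m^{1+\varepsilon})$.

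On the other hand I would bound $\delta_{k,m}-\delta_{m,n}$ from below, uniformly in $k$ and $n$. By concavity of $\gamma$, the secant slope from a fixed endpoint is monotone, so $\delta_{k,m}\ge\delta_{m-1,m}$ and $\delta_{m,n}\le\delta_{m,m+1}$; hence $\delta_{k,m}-\delta_{m,n}\ge\delta_{m-1,m}-\delta_{m,m+1}$, a quantity no longer depending on $k$ or $n$. A direct calculation with $h_j=2^j/j^\varepsilon$ gives
\[
\delta_{m-1,m}-\delta_{m,m+1}=\frac{3}{m^\varepsilon}-\frac{1}{(m-1)^\varepsilon}-\frac{2}{(m+1)^\varepsilon},
\]
and Taylor expanding $(m\pm1)^{-\varepsilon}=m^{-\varepsilon}(1\pm m^{-1})^{-\varepsilon}$ shows this equals $\varepsilon\,m^{-1-\varepsilon}\bigl(1+O(m^{-1})\bigr)$, hence is at least $\varepsilon\,m^{-1-\varepsilon}/2$ for $m$ large. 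Combining with the previous paragraph forces $\varepsilon\,m^{-1-\varepsilon}/2\le O(1)/(c\,m^{1+\varepsilon})$, which is false once $c$ exceeds a suitable constant multiple of $1/\varepsilon$; tracking the implied constants (and using that all indices are large) shows that $c_0(\varepsilon)=12/\varepsilon$ suffices, which is the value $c$ is assumed to exceed in Construction~\ref{const:box}.

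I expect the quantitative concavity estimate to be the main obstacle: mere concavity of $\gamma$ only gives $\delta_{k,m}-\delta_{m,n}>0$, whereas we need this gap to be of order $m^{-1-\varepsilon}$, large enough to beat the $O(1/(c\,m^{1+\varepsilon}))$ slack coming from the squares having positive size. The reduction $\delta_{k,m}-\delta_{m,n}\ge\delta_{m-1,m}-\delta_{m,m+1}$ via secant monotonicity is what makes the lower bound uniform over all triples $k<m<n$, leaving only the single explicit estimate for consecutive indices; after that, the remaining work is careful bookkeeping of the constants to reach $c_0(\varepsilon)=12/\varepsilon$.
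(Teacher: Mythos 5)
Your argument is correct and is essentially the paper's own proof in different clothing: both reduce, via concavity of $x/\log^{\varepsilon}x$, to the consecutive-index estimate $\frac{3}{m^{\varepsilon}}-\frac{1}{(m-1)^{\varepsilon}}-\frac{2}{(m+1)^{\varepsilon}}\geq\frac{\varepsilon}{2}\,m^{-1-\varepsilon}$ (the paper writes this as $3-\frac{n^{\varepsilon}}{(n-1)^{\varepsilon}}-\frac{2n^{\varepsilon}}{(n+1)^{\varepsilon}}>\frac{\varepsilon}{2n}$ via Bernoulli) and then check that this concavity gap beats the $O\bigl(1/(c\,m^{1+\varepsilon})\bigr)$ slack caused by the positive side length of the squares, the only difference being bookkeeping: the paper passes WLOG to corner-to-corner lines between $Q_{n-1}$ and $Q_{n+1}$ and verifies a vertical gap of two side lengths at the left edge of $Q_n$, while you compare secant slopes directly and carry the square-size errors explicitly. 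One small caveat: with the crude intermediate bounds you quote (e.g.\ $h_m-h_k\le 2^m$) the tracked constant comes out larger than $12/\varepsilon$ (sharper bounds such as $h_m-h_k\le 2^m/m^{\varepsilon}$ recover it), but since the lemma only asserts the existence of some $c_0(\varepsilon)=O(1/\varepsilon)$, this does not affect correctness.
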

	
	\begin{proof}
		Suppose that $m<n<k$, and that $\ell$ is a line passing through
		the squares $Q_{m}$ and $Q_{k}$. We desire to show that $Q_{n}$ lies
		strictly above $\ell$. 
		
		Since the $x$ coordinates for $Q_{n}$ lie strictly between those
		for $Q_{m}$ and $Q_{k}$, and since the top edge of $Q_{m}$ lies
		strictly below that of $Q_{k}$, we can assume without loss of generality
		that $\ell$ intersects $Q_{m}$ and $Q_{k}$ at their top left corners.
		By concavity of the curve $x/\log^{\varepsilon}x$, we may assume that
		$m=n-1$ and $k=n+1$.
		
		We now want to show that the lower right hand corner of $Q_{n}$ is above
		$\ell$. Since the slope of $\ell$ is clearly smaller than $1$,
		it suffices to look at the left side of $Q_{n}$, and show that the
		difference between $\left\lfloor \frac{2^{n}}{n^{\varepsilon}}\right\rfloor $
		and the height here of $\ell$ is at least $2\cdot\left\lfloor \frac{2^{n}}{cn^{1+\varepsilon}}\right\rfloor $ 
		(see Figure~\ref{fig:consecutive}).  
		Since $2^{n}/2^{n-1}$ and $2^{n+1}/2^{n}$ are in a $2:1$ ratio,
		this difference is (ignoring floors) 
		\[
		\frac{2^{n}}{n^{\varepsilon}}-\frac{2}{3}\cdot\frac{2^{n-1}}{(n-1)^{\varepsilon}}-\frac{1}{3}\cdot\frac{2^{n+1}}{(n+1)^{\varepsilon}}=\frac{2^{n}}{3n^{\varepsilon}}\cdot\left(3-\frac{n^{\varepsilon}}{(n-1)^{\varepsilon}}-\frac{2n^{\varepsilon}}{(n+1)^{\varepsilon}}\right),
		\]
		and so it is enough to choose $c$ so that
		\[
		3-\frac{n^{\varepsilon}}{(n-1)^{\varepsilon}}-\frac{2n^{\varepsilon}}{(n+1)^{\varepsilon}}=2\cdot\frac{(n+1)^{\varepsilon}-n^{\varepsilon}}{(n+1)^{\varepsilon}}-\frac{n^{\varepsilon}-(n-1)^{\varepsilon}}{(n-1)^{\varepsilon}}
		\]
		is at least $\frac{6}{cn}$. We recall a slight extension
		of Bernoulli's inequality, that $1+\varepsilon x>(1+x)^{\varepsilon}>1+\varepsilon x+\varepsilon(\varepsilon-1)x^{2}/2$
		when $0<\varepsilon<1$ and $-1<x<1$. Here, the second inequality follows
		from alternating series estimation when $x>0$, or from Taylor's inequality
		when $x<0$. Thus, our chain of inequalities continues: 
		\begin{align*}
			2\cdot\frac{(n+1)^{\varepsilon}-n^{\varepsilon}}{(n+1)^{\varepsilon}}-\frac{n^{\varepsilon}-(n-1)^{\varepsilon}}{(n-1)^{\varepsilon}} & >  2\cdot\frac{\varepsilon n^{\varepsilon-1}+\varepsilon(\varepsilon-1)n^{\varepsilon-2}/2}{n^{\varepsilon}+\varepsilon n^{\varepsilon-1}}-\frac{\varepsilon n^{\varepsilon-1}+\varepsilon(\varepsilon-1)n^{\varepsilon-2}/2}{n^{\varepsilon}-\varepsilon n^{\varepsilon-1}+\varepsilon(\varepsilon-1)n^{\varepsilon-2}}\\
			& = \frac{2\varepsilon}{n}\cdot\frac{1+\varepsilon n^{-1}-n^{-1}/2}{1+\varepsilon n^{-1}}-\frac{\varepsilon}{n}\cdot\frac{1+(\varepsilon-1)n^{-1}/2}{1-\varepsilon n^{-1}+\varepsilon(\varepsilon-1)n^{-2}}\\
			& = \frac{\varepsilon}{n}\cdot\left(2-\frac{1}{n+\varepsilon}-\frac{1+(\varepsilon-1)n^{-1}/2}{1-\varepsilon n^{-1}+\varepsilon(\varepsilon-1)n^{-2}}\right).
		\end{align*}
		As the last entry is greater than $\varepsilon/2n$ past very small $n$,
		we may take $c_{0}$ to be $12/\varepsilon$.
	\end{proof}
	
	\begin{lemma}\label{lem:meredek} If $\ell$ is a line which intersects $Q_m$ and $Q_n$ for $m<n$ integers, then the slope $\mu(\ell)$ satisfies
		
		$$\frac{2^{n-m}(\frac{1}{n^\varepsilon}-\frac{1}{cn^{1+\varepsilon}})-\frac{1}{m^\varepsilon}}{2^{n-m}(1+\frac{1}{cn^{1+\varepsilon}})-1}\leq \mu(\ell)\leq \frac{2^{n-m}/n^\varepsilon-\frac{1}{m^\varepsilon}+\frac{1}{cm^{1+\varepsilon}}}{2^{n-m}-1-\frac{1}{cm^{1+\varepsilon}}}.$$
		
	\end{lemma}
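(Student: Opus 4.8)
\emph{Proof sketch.} The plan is to read off the two extreme slopes directly from the coordinates of the two squares, working (as announced above) without the floor functions. Recall that, ignoring floors, $Q_m$ is the axis-parallel square with horizontal extent $[2^m,\, 2^m + 2^m/(cm^{1+\varepsilon})]$ and vertical extent $[2^m/m^\varepsilon - 2^m/(cm^{1+\varepsilon}),\, 2^m/m^\varepsilon]$, and likewise for $Q_n$. First I would record two monotonicity facts. Since $m<n$, the horizontal ranges of $Q_m$ and $Q_n$ are disjoint (indeed $2^{m+1} > 2^m + 2^m/(cm^{1+\varepsilon})$), so every $x_n$ in the horizontal extent of $Q_n$ exceeds every $x_m$ in that of $Q_m$. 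Secondly, the bottom edge of $Q_n$ lies strictly above the top edge of $Q_m$, i.e. $2^n/n^\varepsilon - 2^n/(cn^{1+\varepsilon}) > 2^m/m^\varepsilon$; this follows from the monotonicity of $x\mapsto x/\log^\varepsilon x$ together with $c\ge 12/\varepsilon$ (alternatively, it is implicit in the proof of Lemma~\ref{lem:consecutive2}). Hence $y_n > y_m$ for every choice of a point of $Q_m$ below a point of $Q_n$.

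Next, observe that a line $\ell$ meets both $Q_m$ and $Q_n$ precisely when it passes through some point $(x_m,y_m)\in Q_m$ and some point $(x_n,y_n)\in Q_n$; since the squares are disjoint these are distinct, so $\mu(\ell) = (y_n-y_m)/(x_n-x_m)$, and conversely every such ratio is realised. By the two facts above the numerator and denominator are both positive, and — crucially — they vary independently: the numerator depends only on the $y$-coordinates and the denominator only on the $x$-coordinates, each ranging over an interval as the points range over their squares. Because $(N,D)\mapsto N/D$ is increasing in $N>0$ and decreasing in $D>0$, the set of achievable slopes is exactly the interval $[\,N_{\min}/D_{\max},\; N_{\max}/D_{\min}\,]$, where $N_{\min},N_{\max}$ and $D_{\min},D_{\max}$ are the extreme values of $y_n-y_m$ and of $x_n-x_m$ respectively.

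It then remains to identify these four extremes and simplify. For the lower bound, $N_{\min}$ is the bottom of $Q_n$ minus the top of $Q_m$, namely $2^n/n^\varepsilon - 2^n/(cn^{1+\varepsilon}) - 2^m/m^\varepsilon$, while $D_{\max}$ is the right edge of $Q_n$ minus the left edge of $Q_m$, namely $2^n + 2^n/(cn^{1+\varepsilon}) - 2^m$; dividing numerator and denominator by $2^m$ gives the claimed lower bound. For the upper bound, $N_{\max}$ is the top of $Q_n$ minus the bottom of $Q_m$, i.e. $2^n/n^\varepsilon - 2^m/m^\varepsilon + 2^m/(cm^{1+\varepsilon})$, and $D_{\min}$ is the left edge of $Q_n$ minus the right edge of $Q_m$, i.e. $2^n - 2^m - 2^m/(cm^{1+\varepsilon})$; dividing by $2^m$ yields the claimed upper bound. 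The only genuinely delicate point is the second monotonicity fact, that $Q_n$ sits strictly above $Q_m$: it guarantees that the numerator $y_n-y_m$ is positive, which is what legitimises the otherwise naive ``extremes at the corners'' evaluation of the ratio. The rest is bookkeeping, and the floors are harmless by the convention already adopted. $\qed$
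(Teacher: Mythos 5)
Your proof is correct and follows essentially the same route as the paper: the extreme slopes are realised by the lines through the top-left corner of $Q_m$ and the bottom-right corner of $Q_n$ (minimum) and through the bottom-right corner of $Q_m$ and the top-left corner of $Q_n$ (maximum), and the stated bounds follow by substituting the corner coordinates and dividing by $2^m$. You additionally spell out the positivity/monotonicity facts (disjoint horizontal ranges, $Q_n$ lying strictly above $Q_m$) that the paper dismisses as ``easy to see,'' which is a welcome but not essentially different addition.
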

	\begin{proof}
		Let $A_i$ and $D_i$ denote the top left and the bottom right corner of the square $Q_i$, respectively, for $i\in\{m,n\}$. It is easy to see that the smallest possible slope corresponds to the line $A_mD_n$ and the largest possible slope corresponds to $D_mA_n$. Taking into consideration that $A_i=(2^i, \frac{2^i}{i^\varepsilon})$, and $D_i=(2^i+\frac{2^i}{ci^{1+\varepsilon}}, \frac{2^i}{i^\varepsilon}-\frac{2^i}{ci^{1+\varepsilon}})$, we get the desired bounds.
	\end{proof}
	
	We will need the following technical result on the difference between the upper and lower bounds of Lemma~\ref{lem:meredek}.
	\begin{lemma}[Technical Lemma]
		\label{lem:TechBound}Let $m<n$ be integers, and $0<\varepsilon<1$.
		If $n$ is sufficiently large, then 
		
		\[
		\frac{\frac{1}{n^{\varepsilon}}-\frac{1}{c2^{n-m}m^{\varepsilon}}+\frac{1}{c2^{n-m}m^{1+\varepsilon}}}{1-2^{n-m}-\frac{1}{c2^{n-m}m^{1+\varepsilon}}}-\frac{\frac{1}{n^{\varepsilon}}-\frac{1}{c2^{n-m}m^{\varepsilon}}-\frac{1}{cn^{1+\varepsilon}}}{1-2^{n-m}+\frac{1}{cn^{1+\varepsilon}}}<\frac{11}{cn^{1+\varepsilon}}.
		\]
	\end{lemma}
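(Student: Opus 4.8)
The plan is to collapse the left-hand side into a single fraction via an algebraic identity, after which only soft estimates remain. Set $D := 2^{n-m}$, so $D \ge 2$. Both fractions on the left share a common numerator ``core'' $\alpha := \frac{1}{n^{\varepsilon}} - \frac{1}{cDm^{\varepsilon}}$ and a common denominator ``core'' $\beta := 1 - D$: writing $u := \frac{1}{cDm^{1+\varepsilon}}$ and $v := \frac{1}{cn^{1+\varepsilon}}$, the left-hand side is exactly $\frac{\alpha+u}{\beta-u} - \frac{\alpha-v}{\beta+v}$. Putting this over the common denominator $(\beta-u)(\beta+v)$, the $\alpha\beta$ and $uv$ terms cancel and the four mixed terms regroup, so that $(\alpha+u)(\beta+v)-(\alpha-v)(\beta-u) = (\alpha+\beta)(u+v)$, giving
\[
\frac{\alpha+u}{\beta-u} - \frac{\alpha-v}{\beta+v} = \frac{(\alpha+\beta)(u+v)}{(\beta-u)(\beta+v)}.
\]

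Next I would estimate the three factors, using only that $D \ge 2$, that $\varepsilon < 1$, and that $v \le \frac{1}{12}$ (which holds since $c \ge 12$). Since $\beta = 1-D \le -1$ and $u > 0$, the factor $\beta - u$ is negative with $|\beta-u| = D-1+u \ge D-1$; since $v < 1 \le D-1$, the factor $\beta+v$ is also negative with $|\beta+v| = D-1-v$, so $(\beta-u)(\beta+v) \ge (D-1)(D-1-v) > 0$. For the numerator, $\alpha+\beta = -(D-1) + \big(\tfrac{1}{n^{\varepsilon}} - \tfrac{1}{cDm^{\varepsilon}}\big)$; here $\frac{1}{cDm^{\varepsilon}} \le \frac{1}{n^{\varepsilon}}$ because $(n/m)^{\varepsilon} \le 1 + (n-m) \le 2^{n-m} \le cD$ (using $(1+x)^{\varepsilon} \le 1+x$, as in the proof of Lemma~\ref{lem:consecutive2}, together with $1+k \le 2^k$), and $\frac{1}{n^{\varepsilon}} - \frac{1}{cDm^{\varepsilon}} \le \frac{1}{n^{\varepsilon}} < 1 \le D-1$. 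Hence $\alpha+\beta$ lies in $[-(D-1),0)$, so its absolute value is at most $D-1$. Substituting these bounds, the two factors $D-1$ cancel and the magnitude of the left-hand side is at most $\frac{u+v}{D-1-v} \le \frac{u+v}{11/12}$, using $D-1-v \ge 1 - \frac{1}{12} = \frac{11}{12}$.

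It remains to control $u+v$. The term $v = \frac{1}{cn^{1+\varepsilon}}$ is already of the target order, so it suffices to check $u \le v$ for $n$ large, i.e. $(n/m)^{1+\varepsilon} \le 2^{n-m}$. This is where ``$n$ sufficiently large'' enters: if $n-m$ stays bounded then $m = n-(n-m)$ is large and $(n/m)^{1+\varepsilon} \to 1 < 2 \le 2^{n-m}$, while if $n-m$ is large then $2^{n-m}$ dwarfs the at-most-polynomial quantity $(n/m)^{1+\varepsilon} \le n^2$; splitting, say, at $n-m \le n/2$ versus $n-m > n/2$ makes both cases explicit. Thus $u+v \le 2v = \frac{2}{cn^{1+\varepsilon}}$, and the magnitude of the left-hand side is at most $\frac{12}{11}\cdot\frac{2}{cn^{1+\varepsilon}} = \frac{24}{11}\cdot\frac{1}{cn^{1+\varepsilon}} < \frac{11}{cn^{1+\varepsilon}}$, as required. (In fact the identity shows the left-hand side is negative, since $\alpha+\beta<0<u+v$ and $(\beta-u)(\beta+v)>0$; the absolute-value bound is what matters, as it is this two-sided estimate that controls the admissible slopes in the interval of Lemma~\ref{lem:meredek}.)

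The only genuinely delicate point is that last step, weighing the exponential factor $2^{n-m}$ in $u$ against the polynomial ratio $(n/m)^{1+\varepsilon}$; everything else is the cancellation identity followed by the crude inequalities $D \ge 2$ and $v \le \frac{1}{12}$. Since the constant $11$ in the statement is very generous, even a wasteful bound on $u+v$ (for instance $u+v \le \frac{3}{cn^{1+\varepsilon}}$) would close the argument, so none of the estimates need to be optimized.
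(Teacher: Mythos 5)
Your proof is correct, and at its core it is the same argument as the paper's: after putting the two fractions over a common denominator the numerator collapses --- your factored form $(\alpha+\beta)(u+v)$ is exactly the paper's unfactored numerator $A(u+v)+u(B+v)+v(B-u)$ --- and your decisive comparison $u\le v$, i.e.\ $2^{n-m}m^{1+\varepsilon}\ge n^{1+\varepsilon}$ for $n$ large, is precisely the paper's ``key step''. The one genuine divergence concerns the sign of the denominator core. You take the printed denominators at face value, so $\beta=1-2^{n-m}\le -1$, the left-hand side is negative and the stated inequality is trivially true; your effort then goes into the two-sided bound $\bigl|\mathrm{LHS}\bigr|\le \frac{24}{11}\cdot\frac{1}{cn^{1+\varepsilon}}$. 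The paper instead works with $\frac12<B<1$, which contradicts its own definition $B=1-2^{n-m}$: what it is really estimating is the difference of the two slope bounds of Lemma~\ref{lem:meredek} after dividing numerators and denominators by $2^{n-m}$, so the printed statement evidently carries a typo (the lone $2^{n-m}$ in each denominator should be $2^{m-n}$, and the numerator term $\frac{1}{c2^{n-m}m^{\varepsilon}}$ should have no $c$); that positive-denominator version is the quantity Corollary~\ref{cor:meredek} actually needs. This is a defect of the statement rather than a gap in your argument: your identity and soft estimates apply verbatim to the corrected version, with the denominator bounded below by $B^{2}(1-o(1))\ge 1/5$ in place of $(D-1)(D-1-v)$, and still land comfortably under $\frac{11}{cn^{1+\varepsilon}}$ --- indeed your constants are cleaner than the paper's, whose final display as written evaluates to $\frac{11.5}{cn^{1+\varepsilon}}$ and only beats $11$ after tightening the intermediate bounds. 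It would strengthen your write-up to state this corrected (normalized) form explicitly and run your estimate on it, since that is the bound the rest of Section~\ref{sec:mainproof} consumes.
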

	
	\begin{cor}\label{cor:meredek}   Let $\ell$ be a line having slope $\mu(\ell)=r/q$, and that intersects $Q_m$ and $Q_n$ for $m<n$ integers.  If $n$ is sufficiently large, then $r$ can admit at most $q\cdot \frac{11}{cn^{1+\varepsilon}}$ different values.
	\end{cor}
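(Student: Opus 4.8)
The plan is to read off the statement by combining Lemma~\ref{lem:meredek} with the Technical Lemma (Lemma~\ref{lem:TechBound}). First I would invoke Lemma~\ref{lem:meredek}: since $\ell$ meets both $Q_m$ and $Q_n$, its slope $\mu(\ell)$ is trapped in a fixed closed interval $[L_1,L_2]$, where $L_1$ and $L_2$ are the lower and upper expressions displayed there. Dividing the numerator and denominator of each of $L_1$ and $L_2$ by $2^{n-m}$ and tidying the error terms rewrites them in precisely the normalized shape of the two fractions appearing in Lemma~\ref{lem:TechBound}. Consequently that lemma says exactly that, for all sufficiently large $n$, the length $L_2-L_1$ of this interval is less than $\frac{11}{cn^{1+\varepsilon}}$.

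The second step is to translate this from the slope $\mu(\ell)$ to the numerator $r$. Writing $\mu(\ell)=r/q$ and multiplying the inequalities $L_1\le r/q\le L_2$ by $q$ shows that $r$ lies in the real interval $[qL_1,qL_2]$, whose length is $q(L_2-L_1)<q\cdot\frac{11}{cn^{1+\varepsilon}}$. A real interval of length $\delta$ contains at most $\lfloor\delta\rfloor+1$ integers, so $r$ can take at most $q\cdot\frac{11}{cn^{1+\varepsilon}}+1$ values; the harmless additive $1$ is either absorbed (in the application one only uses $q$ large enough that $q\cdot\frac{11}{cn^{1+\varepsilon}}\ge 1$, so it at worst doubles the bound, which is compensated by the slack built into Lemma~\ref{lem:TechBound}) or dispatched by treating the few small values of $q$ directly. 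Either way one obtains the claimed bound of $q\cdot\frac{11}{cn^{1+\varepsilon}}$ admissible values of $r$.

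There is no substantive obstacle at this step: all of the analytic difficulty --- the careful Bernoulli/Taylor-type expansions needed to control $L_2-L_1$ --- is already packaged in Lemma~\ref{lem:TechBound}, while Lemma~\ref{lem:consecutive2} guarantees that the hypothesis ``$\ell$ meets $Q_m$ and $Q_n$ with $m<n$'' is the only configuration one ever needs. The single point that requires a little care is the bookkeeping in the first step, namely checking that the two fractions of Lemma~\ref{lem:TechBound} really are (upper and lower estimates for) the slope bounds of Lemma~\ref{lem:meredek} after clearing the common factor $2^{n-m}$; once that identification is made, the corollary is immediate.
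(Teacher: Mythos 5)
Your proposal matches the paper's intended argument exactly: the paper gives no separate proof of Corollary~\ref{cor:meredek}, treating it as an immediate consequence of the slope interval from Lemma~\ref{lem:meredek} together with the width bound of Lemma~\ref{lem:TechBound}, and then multiplying by $q$ to count the possible numerators $r$, just as you do. Your remarks about the additive $+1$ in the integer count and about matching the normalized fractions to those of Lemma~\ref{lem:TechBound} (which in the paper differ by what appear to be minor typographical slips) are careful bookkeeping the paper itself glosses over, and do not change the approach.
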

	
	\begin{proof}[Proof (of Lemma~\ref{lem:TechBound})]
		The key step is to notice that as $2^{n}/n^{1+\varepsilon}>2^{m}/m^{1+\varepsilon}$
		for $n$ sufficiently large, also $2^{n-m}m^{1+\varepsilon}>n^{1+\varepsilon}$.
		Write $A$ for $\frac{1}{n^{\varepsilon}}-\frac{1}{c2^{n-m}m^{\varepsilon}}$
		and $B$ for $1-2^{n-m}$. Then the left-hand side of the inequality is 
		\[
		\frac{A\cdot\left(\frac{1}{cn^{1+\varepsilon}}+\frac{1}{c2^{n-m}m^{1+\varepsilon}}\right)+\frac{1}{c2^{n-m}m^{1+\varepsilon}}\cdot\left(B+\frac{1}{cn^{1+\varepsilon}}\right)+\frac{1}{cn^{1+\varepsilon}}\cdot\left(B-\frac{1}{c2^{n-m}m^{1+\varepsilon}}\right)}{B^{2}(1\pm o(1))}.
		\]
		Notice that $A\to0$, as do the smaller terms on the bottom, and also
		that $\frac{1}{2}<B<1$. Applying the key step, we thus bound the
		left-hand side for large enough $n$ by 
		\[
		\frac{\frac{1}{10}\cdot\frac{2}{cn^{1+\varepsilon}}+\frac{1}{cn^{1+\varepsilon}}\cdot\frac{11}{10}+\frac{1}{cn^{1+\varepsilon}}\cdot1}{1/5}<\frac{11}{cn^{1+\varepsilon}}.\qedhere
		\]
	\end{proof}
	
	\subsection{Red points and blue pairs}\label{subsec:RedBlue}
	
	Lemma~\ref{lem:consecutive2} says that any line passing through three
	points in our construction must pass through two points in one square,
	and at least one point in another. Lemma~\ref{lem:meredek} and  
	Corollary~\ref{cor:meredek} will let us say that this is rare.
	
	\begin{lemma}\label{lem:linesum}
		Let $m<n$, where $n$ is sufficiently large.  The number of collinear triples determined by two points from $S \cap Q_m$ and one lattice point of $Q_n$ is at most $$\frac{8 \cdot2^{m+n}}{c^3n^{2+2\varepsilon}m^{1+\varepsilon}}\cdot n.$$
	\end{lemma}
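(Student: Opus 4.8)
The plan is to count collinear triples consisting of two points $Q_1, Q_2 \in S \cap Q_m$ and one lattice point $P \in Q_n$ by organizing the count according to the direction (slope) of the line $Q_1Q_2$. First I would observe that every such triple lies on a line $\ell$ which intersects both $Q_m$ and $Q_n$, so Lemma~\ref{lem:meredek} and Corollary~\ref{cor:meredek} constrain the slope: writing $\mu(\ell) = r/q$ in lowest terms, the numerator $r$ can take at most $q \cdot \frac{11}{cn^{1+\varepsilon}}$ values (for $n$ large). So the slopes realized by these triples are highly restricted.

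Next I would bound, for a \emph{fixed} slope $s = r/q$, how many triples can occur. Two points $Q_1, Q_2$ of the parabola $\mathcal P_m$ with $Q_1Q_2$ of slope $s$ must have $x$-coordinates differing by a multiple of $q$; since the parabola $\mathcal P_m$ lives in a square of side roughly $2^m/(cm^{1+\varepsilon})$, the difference vector $(u,v) = Q_2 - Q_1$ satisfies $|u| \le 2^m/(cm^{1+\varepsilon})$ and $v = su$, so there are at most $\frac{2^m}{cm^{1+\varepsilon} q}$ admissible difference vectors. By part (2) of Claim~\ref{claim:nagyon_para}, for each such difference vector $(u,v)$ there is \emph{at most one} pair $\{Q_1, Q_2\}$ on $\mathcal P_m$ realizing it. Hence for a fixed slope $s = r/q$ there are at most $\frac{2^m}{cm^{1+\varepsilon} q}$ pairs $\{Q_1, Q_2\}$. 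Given such a pair, the third point $P$ must lie on the line through them and inside $Q_n$; since $Q_n$ has side $\approx 2^n/(cn^{1+\varepsilon})$ and the lattice points of $Q_n$ on a line of slope $r/q$ are spaced $q$ apart in the $x$-direction, there are at most $\frac{2^n}{cn^{1+\varepsilon} q}$ choices for $P$.

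Then I would assemble the pieces. Summing over the at most $q \cdot \frac{11}{cn^{1+\varepsilon}}$ allowed numerators $r$ for a fixed denominator $q$, the count of triples with slope having denominator exactly $q$ is at most
\[
q \cdot \frac{11}{cn^{1+\varepsilon}} \cdot \frac{2^m}{cm^{1+\varepsilon} q} \cdot \frac{2^n}{cn^{1+\varepsilon} q} = \frac{11 \cdot 2^{m+n}}{c^3 n^{2+2\varepsilon} m^{1+\varepsilon} q}.
\]
Finally I sum over $q$ ranging from $1$ up to the largest possible denominator, which is at most the side length $\approx 2^n/(cn^{1+\varepsilon}) \le 2^n$ (a line of slope $r/q$ meeting both boxes has $q$ bounded by the horizontal extent); using Result~\ref{result:harmo}, $\sum_{q=1}^{2^n} 1/q = \ln(2^n) + O(1) = O(n)$. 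This produces a bound of order $\frac{2^{m+n}}{c^3 n^{2+2\varepsilon} m^{1+\varepsilon}} \cdot n$, and with care on constants the factor in front can be taken to be $8$, giving exactly the claimed bound.

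The main obstacle I anticipate is tracking the constants carefully enough to land on the precise constant $8$ — in particular, verifying that the harmonic sum contributes a clean factor of $n$ (rather than $n\ln 2 + O(1)$, which is fine since $\ln 2 < 1$) and that the slope-counting in Corollary~\ref{cor:meredek} and the pair-counting via Claim~\ref{claim:nagyon_para} can be combined without losing more than a small multiplicative constant. A secondary subtlety is making sure the ``at most one pair per difference vector'' argument applies verbatim: the parabola $\mathcal P_m$ is a translated modular parabola with leading coefficient $1$, so Claim~\ref{claim:nagyon_para}(2) does apply, but one must check that reducing modulo $p_m$ does not create spurious coincidences inside the single fundamental domain $Q_m$ — which it does not, since $Q_m$ has side length less than $p_m$ by construction (the floors being harmless).
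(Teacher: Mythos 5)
Your proposal is correct and follows essentially the same route as the paper: decompose by the slope denominator $q$, use Claim~\ref{claim:nagyon_para}(2) to get at most one pair of $S\cap Q_m$ per admissible difference, Corollary~\ref{cor:meredek} to bound the number of numerators $r$ by $q\cdot\frac{11}{cn^{1+\varepsilon}}$, count at most $\frac{2^n}{qcn^{1+\varepsilon}}$ lattice points of $Q_n$ on each line, and finish with the harmonic sum. The only small remark is that the final constant requires the inequality $11\ln 2<8$ (not merely $\ln 2<1$), which does hold, so your bound goes through exactly as in the paper even with your slightly generous range $q\le 2^n$.
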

	
	\begin{proof} 
		We first count the number of lines $\ell$ intersecting $S \cap Q_m$ in two points according to the denominator of their slope.  Let $\ell$ have slope $\mu(\ell)=\frac{r}{q}$, where $r$ and $q$ are relatively prime.  Consider the projection onto the $x$ axis of the line segment between the two points of intersection.  The length $d$ of this projection must be a multiple of $q$, at most $\frac{2^m}{ c\cdot m^{1+\varepsilon}}$.  Moreover, it follows from Claim~\ref{claim:nagyon_para} that $d$ and $\frac{r}{q}$ uniquely determine the two points on $S \cap Q_m$, so each value of $d$ occurs at most once.
		
		Thus, we have at most $\frac{2^m}{q\cdot c\cdot m^{1+\varepsilon}}$ pairs from $S \cap Q_m$ determining a line of slope $r/q$.  If $\ell$ also passes through $Q_n$, then by Corollary~\ref{cor:meredek} there are at most $q\cdot\frac{11}{cn^{1+\varepsilon}}$ possible values of $r$.  By examining the $x$ coordinate, there are at most $\frac{2^n}{q\cdot c\cdot n^{1+\varepsilon}}$ lattice points in $Q_n$ lying on $\ell$.
		
		We finish the proof by summing over possible values of $q$, noticing that $11\cdot\ln2<8$ and taking into consideration Result~\ref{result:harmo}:
		\begin{align*}
			\sum_{q<\frac{2^{m}}{cm^{1+\varepsilon}}}\frac{1}{q}\cdot\frac{2^{n}}{cn^{1+\varepsilon}}\cdot\frac{2^{m}}{cm^{1+\varepsilon}}\cdot\frac{11}{cn^{1+\varepsilon}} & \leq\frac{11\cdot2^{m+n}}{c^{3}n^{2+2\varepsilon}m^{1+\varepsilon}}\cdot(\ln(\frac{2^{m}}{cm^{1+\varepsilon}})+O(1))\\
			& <\frac{8\cdot2^{m+n}}{c^{3}n^{2+2\varepsilon}m^{1+\varepsilon}}\cdot n.\qedhere
		\end{align*}
	\end{proof}
	
	We color as {\em red} the points in $Q_n$ that are on the same line with a pair of points in $S \cap Q_m$ for some $m<n$.  By summing the bound of Lemma~\ref{lem:linesum} over all $m<n$ and applying Claim~\ref{claim:sum}, we see that:
	\begin{cor}\label{cor:linesum}
		The number of red points in $Q_n$ is at most
		\begin{equation}\label{eq:linesum1_sum} \sum_{m<n}  \frac{8 \cdot2^{m+n}}{c^3n^{2+2\varepsilon}m^{1+\varepsilon}}\cdot n < \frac{16 \cdot2^{n+n}}{c^3n^{2+2\varepsilon}n^{1+\varepsilon}}\cdot n.
		\end{equation}
	\end{cor}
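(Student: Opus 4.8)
\textbf{Proof proposal for Corollary~\ref{cor:linesum}.}
The plan is to dominate the number of red points in $Q_n$ by the total number of the collinear triples counted in Lemma~\ref{lem:linesum}, then estimate the resulting sum term by term and collapse it with Claim~\ref{claim:sum}.

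First I would make the (easy) observation that passing from red points to collinear triples only improves an upper bound. By the definition of ``red'', every red point in $Q_n$ is a lattice point of $Q_n$ lying on a line through two points of $S\cap Q_m$ for some $m<n$; hence it forms a collinear triple of exactly the type enumerated in Lemma~\ref{lem:linesum}. Distinct red points yield distinct triples, so the number of red points in $Q_n$ is at most $\sum_{m=1}^{n-1}$ of the bound supplied by Lemma~\ref{lem:linesum} (the fact that one red point may lie on many such lines, or be witnessed for several values of $m$, can only inflate the right-hand side and is harmless). This gives the left-hand estimate in~\eqref{eq:linesum1_sum}, namely
\[
\#\{\text{red points in }Q_n\}\ \le\ \sum_{m<n}\frac{8\cdot 2^{m+n}}{c^3 n^{2+2\varepsilon} m^{1+\varepsilon}}\cdot n .
\]

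Next I would factor out of the sum everything not depending on $m$, leaving
\[
\frac{8\cdot 2^{n}\cdot n}{c^3 n^{2+2\varepsilon}}\sum_{m=1}^{n-1}\frac{2^m}{m^{1+\varepsilon}},
\]
and apply Claim~\ref{claim:sum} (legitimate since $n$ is taken large, in particular $n>10$) to replace the remaining sum by $2\cdot\frac{2^n}{n^{1+\varepsilon}}$. Tracking the exponents of $2$ and of $n$ then yields $\frac{16\cdot 2^{2n}\cdot n}{c^3 n^{3+3\varepsilon}}$, which is precisely the right-hand side of~\eqref{eq:linesum1_sum}. For the strict inequality one observes that Claim~\ref{claim:sum} has genuine slack for large $n$ — for instance the $m=n-1$ summand alone is at most $\tfrac12\cdot\frac{2^n}{(n-1)^{1+\varepsilon}}$, well below the $\frac{2^n}{n^{1+\varepsilon}}$ it is allotted — so the final bound is strict.

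I do not expect a real obstacle here: this is a bookkeeping corollary. The only two points deserving care are (i) that the reduction from red points to triples is an inequality in the correct direction, which holds because each red point is witnessed by at least one triple counted in Lemma~\ref{lem:linesum}, and (ii) the arithmetic of combining exponents when invoking Claim~\ref{claim:sum}, which is mechanical. Everything else is a direct substitution.
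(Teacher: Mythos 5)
Your proposal is correct and matches the paper's argument: the paper likewise bounds the red points in $Q_n$ by summing the triple count of Lemma~\ref{lem:linesum} over all $m<n$ and collapsing the sum $\sum_{m<n} 2^m/m^{1+\varepsilon}$ via Claim~\ref{claim:sum}. Your added remark that each red point is witnessed by at least one counted triple (so overcounting only helps) is exactly the implicit reduction in the paper, and your exponent bookkeeping reproduces the stated bound.
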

	We notice that the ratio between the number of red points and the number of all points of $Q_n$  is at most $\frac{16}{cn^\varepsilon}$.  Thus, it is clear that we will be able to choose $a_n, b_n$ as in Construction~\ref{const:box} to avoid all but an insignificant number of red points.
	
	However, we must now also consider lines that intersect two points in $Q_n$, together with some point in $S\cap Q_m$ for some $m<n$.  In analogy with our red points, we call a pair of lattice points in $Q_n$ \emph{blue} if at least one of the pair is not red, and if the two are on a common line with a point of $S \cap Q_m$ for some $m<n$.  Thus, to prove Theorem~\ref{thm:main}, it will suffice to show that there is a choice of $a_n, b_n$ that simultaneously avoids all but a small number of both red points and blue pairs.
	
	\begin{lemma}\label{lem:linesum2}
		The number of blue pairs in $Q_n$ is at most $\ \displaystyle{\frac{8\cdot 2^{3n}}{c^4 n^{3 + 4\varepsilon }}}.$ 
	\end{lemma}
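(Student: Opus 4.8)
The plan is to count blue pairs by a similar bookkeeping to the one used for red points in Lemma~\ref{lem:linesum} and Corollary~\ref{cor:linesum}, but now fixing the "external" point in $S \cap Q_m$ and counting the pairs inside $Q_n$ collinear with it. So fix $m < n$ and fix a point $P \in S \cap Q_m$. A blue pair coming from $P$ lies on a line $\ell$ through $P$ that meets $Q_n$ in (at least) two lattice points. First I would organize the count by the slope $\mu(\ell) = r/q$ in lowest terms: Lemma~\ref{lem:meredek} together with Corollary~\ref{cor:meredek} says that for lines meeting both $Q_m$ and $Q_n$, once the denominator $q$ is fixed the numerator $r$ ranges over at most $q \cdot \frac{11}{cn^{1+\varepsilon}}$ values. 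For each such admissible slope, the line $\ell$ through $P$ is determined, and it meets $Q_n$ in at most $\frac{2^n}{q \cdot c \cdot n^{1+\varepsilon}}$ lattice points (examining the $x$-coordinate, exactly as in Lemma~\ref{lem:linesum}), hence contributes at most $\binom{\,2^n/(q c n^{1+\varepsilon})\,}{2} \le \frac{2^{2n}}{2 q^2 c^2 n^{2+2\varepsilon}}$ pairs.

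Next I would sum over $q$, over the choices of $P$, and over $m$. Summing the per-slope pair count over admissible $r$ and over $q < \frac{2^m}{c m^{1+\varepsilon}}$ gives, for a fixed $P$,
\[
\sum_{q} q \cdot \frac{11}{c n^{1+\varepsilon}} \cdot \frac{2^{2n}}{2 q^2 c^2 n^{2+2\varepsilon}}
= \frac{11 \cdot 2^{2n}}{2 c^3 n^{3+3\varepsilon}} \sum_{q < 2^m/(c m^{1+\varepsilon})} \frac{1}{q}
= O\!\left(\frac{2^{2n}}{c^3 n^{3+3\varepsilon}} \cdot m\right),
\]
using Result~\ref{result:harmo} for the harmonic sum. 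The number of points $P \in S \cap Q_m$ is at most the number of lattice points of $Q_n$'s analogue for $Q_m$, namely at most $\frac{2^m}{c m^{1+\varepsilon}}$. Multiplying, the contribution of a single $m$ is $O\!\left(\frac{2^{m+2n}}{c^4 n^{3+3\varepsilon} m^{\varepsilon}}\right)$, and since $m^{\varepsilon} \ge 1$ this is at most $O\!\left(\frac{2^{m+2n}}{c^4 n^{3+3\varepsilon}}\right)$. Finally, summing over $m < n$ and applying Claim~\ref{claim:sum} (in the form $\sum_{m<n} 2^m \le 2 \cdot 2^n$, the $\varepsilon = 0$ case, or more wastefully $\sum_{m<n} 2^m/m^{1+\varepsilon} \le 2 \cdot 2^n/n^{1+\varepsilon}$) collapses the geometric sum and yields a bound of the shape $\frac{C \cdot 2^{3n}}{c^4 n^{3+4\varepsilon}}$; tracking the constants carefully and using $11 \ln 2 < 8$ as in Lemma~\ref{lem:linesum} should bring $C$ down to $8$ for $n$ sufficiently large. (Getting the extra $n^{\varepsilon}$ in the denominator — i.e. $n^{3+4\varepsilon}$ rather than $n^{3+3\varepsilon}$ — is exactly what the $\sum 2^m/m^{1+\varepsilon} \le 2\cdot 2^n/n^{1+\varepsilon}$ form of Claim~\ref{claim:sum} buys us, so I would use that form rather than the crude one.)

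The main obstacle I anticipate is the interaction between the "at least one of the pair is not red" clause in the definition of blue pair and the counting above: the naive bound counts all collinear triples (one point of $S\cap Q_m$, two lattice points of $Q_n$) regardless of redness, so I should check that simply ignoring the redness restriction only loses a constant factor — which it does, since the redness clause only removes pairs, so the count above is an honest upper bound for the number of blue pairs. A secondary nuisance is keeping the constants honest through the two geometric/harmonic summations so that the final constant is genuinely $8$ and not merely $O(1)$; this is the same style of estimate as Lemma~\ref{lem:linesum}, so I would mimic that computation line for line, folding the stray $\ln$-factor into the gain from $11\ln 2 < 8$ and the spare $n^{\varepsilon}$ into Claim~\ref{claim:sum}. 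No genuinely new idea beyond Lemmas~\ref{lem:meredek}–\ref{lem:linesum} seems to be needed; the content is really just "do Lemma~\ref{lem:linesum} with the roles of $Q_m$ and $Q_n$ swapped and with a point of $S\cap Q_m$ held fixed."
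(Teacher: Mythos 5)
Your overall strategy coincides with the paper's: fix $m$, fix the single point of $S\cap Q_m$ through which the line passes (the paper phrases this via the redness clause, which guarantees a line carrying a blue pair meets $S\cap Q_m$ in exactly one point; your remark that ignoring redness still gives an honest upper bound is equally valid), use Corollary~\ref{cor:meredek} to bound the number of admissible numerators $r$ for each denominator $q$, bound the pairs on each line by $\binom{2^n/(qcn^{1+\varepsilon})}{2}<\frac{2^{2n}}{2q^2c^2n^{2+2\varepsilon}}$, and then sum over $q$ and $m$ via the harmonic estimate and Claim~\ref{claim:sum}.

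The genuine flaw is the range of the $q$-summation. You sum over $q<\frac{2^m}{cm^{1+\varepsilon}}$, imported from Lemma~\ref{lem:linesum}; but there that restriction came from the two points of the pair lying in $Q_m$ (their $x$-distance, a positive multiple of $q$, is at most the width of $Q_m$). Here the pair lies in $Q_n$ and the line meets $Q_m$ in only one lattice point, so the only constraint is $q<\frac{2^n}{cn^{1+\varepsilon}}$: a line through $P\in S\cap Q_m$ and a blue pair can perfectly well have slope denominator $q$ with $\frac{2^m}{cm^{1+\varepsilon}}\le q<\frac{2^n}{cn^{1+\varepsilon}}$, and your count omits all such lines, so as written it is not an upper bound on the number of blue pairs. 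The repair is immediate and turns your computation into the paper's: with the correct range the harmonic sum is at most $n\ln 2+O(1)$, the inequality $11\ln 2<8$ converts the factor $\frac{11}{2}\sum_q\frac1q$ into $\frac{8}{2}\,n$ for large $n$, and then $\sum_{m<n}\frac{2^m}{cm^{1+\varepsilon}}\le\frac{2\cdot 2^n}{cn^{1+\varepsilon}}$ (the form of Claim~\ref{claim:sum} you mention in your closing parenthetical) yields exactly $\frac{8\cdot 2^{3n}}{c^4n^{3+4\varepsilon}}$. In particular the extra $n^{\varepsilon}$ comes from applying Claim~\ref{claim:sum} after the harmonic factor $n$ has been extracted, not from the $m^{\varepsilon}\ge 1$ juggling in your intermediate step, which (together with the undersized $q$-range) is what left you stranded at $n^{3+3\varepsilon}$ and forced the vague ``tracking the constants should bring $C$ down to $8$.''
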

	
	\begin{proof}
		Consider a fixed $m<n$, and fix the denominator $q$ of the slope $r/q$ of a line $\ell$ that intersects $S \cap Q_m$ and at least one blue pair of $Q_n$.  As in the proof of Lemma~\ref{lem:linesum}, Corollary~\ref{cor:meredek} gives that there are at most $q\cdot \frac{11}{cn^{1+\varepsilon}}$ choices for $r$.
		
		If $\ell$ would pass through more than one point of $Q_m$, then all points on $\ell$ in $Q_n$ would be colored red, and in particular none of them would be in a blue pair.  Thus, the line $\ell$ is determined (given $r$) by its intersection with $S \cap Q_m$, and we have at most $|S \cap Q_m| \leq \frac{2^m}{cm^{1+\varepsilon}}$ such lines.  Finally, on each line we have at most 
		$${\binom{\frac{2^{n}}{q\cdot cn^{1+\varepsilon}}}{2}}<\frac{2^{2n}}{2q^{2}c^{2}n^{2+2\varepsilon}}$$
		points from $Q_n$.  Combining these choices, we have at most 
		$$\frac{1}{q}\cdot\frac{11}{cn^{1+\varepsilon}}\cdot\frac{2^{m}}{cm^{1+\varepsilon}}\cdot\frac{2^{2n}}{2c^{2}n^{2+2\varepsilon}}$$
		pairs relative to a fixed $q$ and $m$.  We now notice that as $\ell$ passes through two points of $Q_n$, we must have $q < \frac{2^n}{cn^{1+\varepsilon}}$.  Summing over $q$ and $m$, in view of Result~\ref{result:harmo} and Claim~\ref{claim:sum}, we obtain
		
		\[
		\sum_{m=1}^{n-1}\sum_{q<\frac{2^{n}}{cn^{1+\varepsilon}}}\frac{1}{q}\cdot\frac{2^{m}}{cm^{1+\varepsilon}}\cdot\frac{11\cdot2^{2n}}{2c^{3}n^{3+3\varepsilon}} \leq\sum_{m}\frac{2^{m}}{cm^{1+\varepsilon}}\cdot\frac{8\cdot2^{2n}}{2c^{3}n^{3+3\varepsilon}}\cdot n \leq\frac{8\cdot2^{3n}}{c^{4}n^{3+4\varepsilon}}.\qedhere
		\]
	\end{proof}
	
	\subsection{Proof of Theorem \ref{thm:main}}
	We build our point set as in Construction~\ref{const:box}, using a value of $c$ large enough to satisfy the condition of Lemma~\ref{lem:consecutive2}.  It follows from Claim~\ref{claim:sum} that $\sum p_n$ has the desired growth rate.  Thus, provided the number of deleted points from each $Q_n$ is small, our iteratively-defined set $S$ will satisfy the requirement of the theorem.  We will show that for every $n\in \mathbb{N}$, there is a choice of parameters $a_n, b_n$, so that the number of points deleted from each $Q_n$ is of the order $p_n/n^{\varepsilon}$.
	
	Lemma~\ref{lem:consecutive2} implies that, due to the sizes and positions of the squares $Q_n$, there will be no collinear triples formed by points of three distinct squares $Q_i$.  Certain points (the red points of Section~\ref{subsec:RedBlue}) or pairs of points (the blue pairs of the same section) from $Q_n$ may form a collinear triple with a point or points of $S\cap Q_m$ for some $m<n$.  
	
	We will show that the number of such collinear triples is small.  We will use a double
	counting argument on the set $\mathcal{I}$ of pairs $(\mathcal{P},T)$,
	where $\mathcal{P}$ is a parabola modulo $p_{n}$ in $Q_{n}$, and
	$T$ is a collinear triple on $\mathcal{P}\cup(S\cap Q_{m})$ for
	some $m<n$.
	
	We first count $\mathcal{I}$ by collinear triples. Each red point
	of $Q_{n}$ determines a collinear triple with two points from $S\cap Q_{m}$
	(some $m<n)$, and lies on exactly $p_{n}$ parabolas. Each blue pair
	of $Q_{n}$ determines a collinear triple with some point from $S\cap Q_{m}$
	(some $m<n$), and lies on a unique parabola. It follows that 
	\begin{align*}
		\left|\mathcal{I}\right| & =\left|\{(\mathcal{P},C):C\in\mathcal{P}\mbox{ is red}\}\right|+\left|\{(\mathcal{P},\{C,D\}):\{C,D\}\subseteq\mathcal{P}\text{ is a blue pair}\}\right|\\
		& \leq\frac{16\cdot2^{2n}}{c^{3}n^{2+3\varepsilon}}\cdot p_{n}+\frac{8\cdot2^{3n}}{c^{4}n^{3+4\varepsilon}}.
	\end{align*}
	
	We now count $\mathcal{I}$ by parabolas. Indeed, as there are $p_{n}$
	choices for each of $a$ and $b$, the mean number of pairs $(\mathcal{P}(a,b),T)$
	in $\mathcal{I}$ for a choice of parameters $(a,b)$ is $\left|\mathcal{I}\right|/p_{n}^{2}$.
	In particular, there is some choice $a_{n},b_{n}$ of parameters for
	a parabola in $Q_{n}$ with at most 
	\[
	\left(\frac{16\cdot2^{2n}}{c^{3}n^{2+3\varepsilon}}\cdot p_{n}+\frac{8\cdot2^{3n}}{c^{4}n^{3+4\varepsilon}}\right)\bigg/p_{n}^{2}=O\left(\frac{2^{n}}{n^{1+2\varepsilon}}\right)
	\]
	pairs in $\mathcal{I}$ having $\mathcal{P}_{n}:=\mathcal{P}(a_{n},b_{n})$
	in the parabola entry. 
	
	We now form $S\cap Q_{n}$ by selecting the parabola $\mathcal{P}_{n}$,
	and deleting a point from each collinear triple $T$ so that $(\mathcal{P}_{n},T)$
	is in $\mathcal{I}$. Thus, we delete from $\mathcal{P}_{n}$ all
	incident red points, and one point from each incident blue pair. As
	$p_{n}$ is only slightly smaller than $\frac{2^{n}}{cn^{1+\varepsilon}}$,
	the number of points in the constructed $S\cap Q_{n}$ is $\frac{2^{n}}{cn^{1+\varepsilon}}\cdot(1-o(n))$.
	This suffices to complete the proof.
	
	\section{Greedy lexicographic construction}
	\begin{figure}[h]
		\centering
		\includegraphics[width=0.7\textwidth,trim={0 0 0 1.3cm},clip]{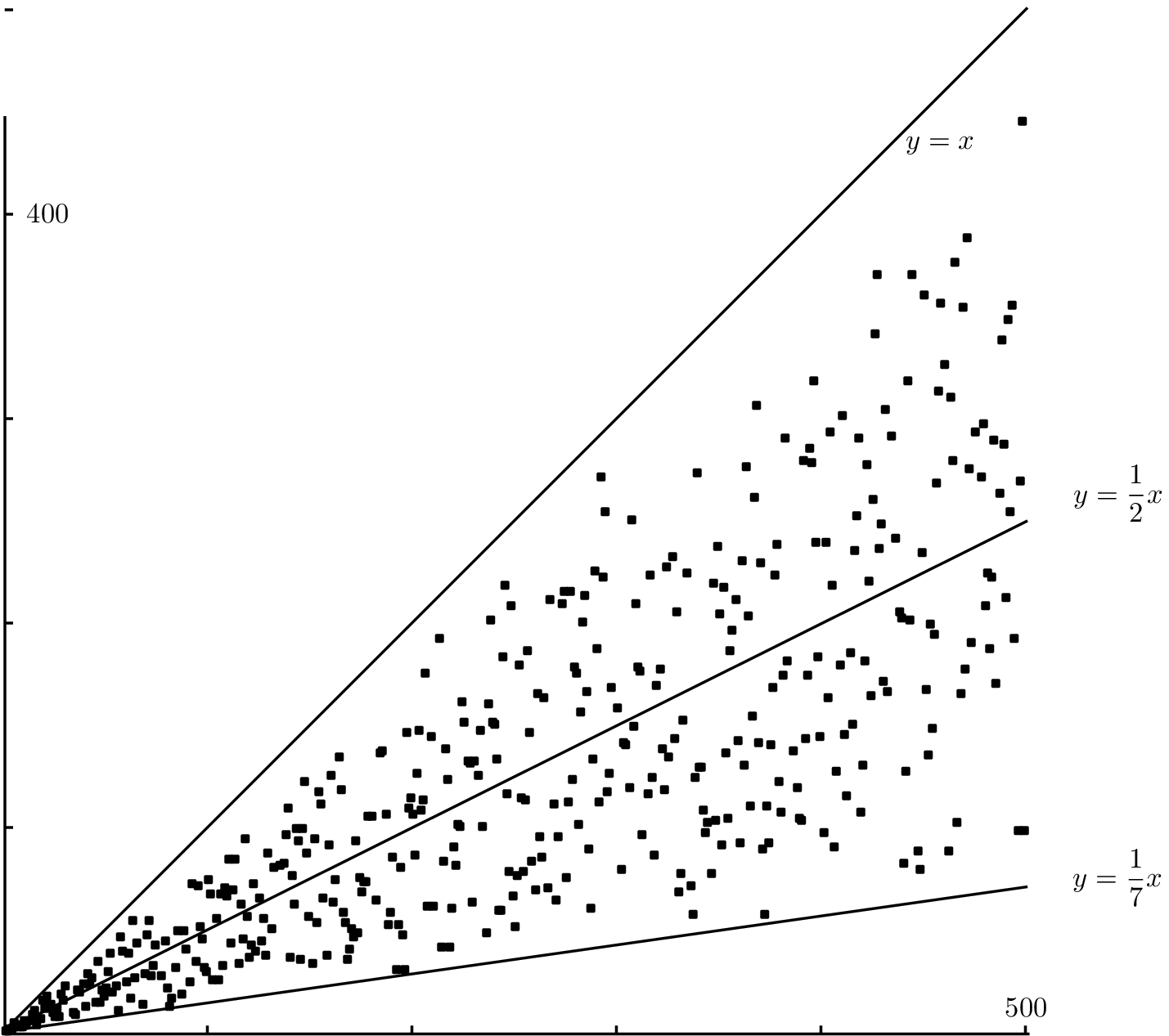}
		\caption{Construction $S_{\mathrm{lex}<}$ for $n=500$}\label{fig:Slex}
	\end{figure}
	
	A concrete and deterministic construction for large grid sets that are in general position may be described as follows.  We iteratively look at each vertical line $x=1$, $x=2$, etc.  At each line $x=i$, we record the least positive $j$ so that the point $(i,j)$ is not on a common line with any two points to its left.  The limit of this process yields an infinite set $S_{\mathrm{lex}}$ of triple-wise non-collinear integer points.  This construction has been previously examined in OEIS sequence A236335, and similar constructions are the subject of sequences A236266, A179040 \cite{sloane2007line}.
	
	Since we are interested in high density in squares $[1, n]^2$, we vary the lexicographic construction slightly to require $j < i$ at each step, yielding a set $S_{\mathrm{lex}<}$.  Experimental evidence suggests that $[1, n]^2 \cap S_{\mathrm{lex}<}$ is of approximate size that is slightly larger than $0.8 \cdot n$.  Refer to Table~\ref{tab:table_1} for densities of $S_{\mathrm{lex}<}$ at several values of $n$, or to Figure~\ref{fig:Slex} for a plot of the first points in $S_{\mathrm{lex}<}$.
	
	\begin{table}[h!]
		\centering
		\begin{tabular}{|r||l|l|l|l|l|l|l|l|l|l|l|}
			\hline 
			$n$ & 100 & 200 & 300 & 400 & 500 & 1000 & 2000 & 3000 & 4000 & 5000 & 10000\tabularnewline
			\hline 
			\# points $S_{\mathrm{lex}<}$ & $81$ & $166$ & $254$ & $340$ & $424$ & $830$ & $1678$ & $2515$ & $3353$ & $4197$ & $8385$\tabularnewline
			\hline 
			density \% $n$ & $81\%$ & $83$ & $84.6$ & $85$ & $84.8$ & $83$ & $83.9$ & $83.8$ & $83.8$ & $83.9$ & $83.9$\tabularnewline
			\hline 
			\# points $S_{\mathrm{mod2lex}}$ & $50$ & $100$ & $150$ & $200$ & $250$ & $500$ & $1000$ & $1500$ & $2000$ & $2500$ & $5000$\tabularnewline
			\hline 
			density \% $n$ & $50\%$ & $50$ & $50$ & $50$ & $50$ & $50$ & $50$ & $50$ & $50$ & $50$ & $50$\tabularnewline
			\hline 
		\end{tabular}
		\caption{Density achieved by the lexicographic greedy construction}\label{tab:table_1}
	\end{table}
	
	A variant of some interest is to look only at the even lines $x=2$, $x=4$, etc.,  and to apply the lexicographic construction on these, again with the requirement that $j < i$ at each step. We call the obtained set $S_{\mathrm{mod2lex}}$.  Experimental evidence suggests that the lexicographic construction always yields a general position point above each even $i$; moreover, in our experiments, this point always lies so that $j < \frac{2}{3}i$ (see Figure~\ref{fig:Smod2lex}).   Rigorous analysis of this type of greedy approach appears to be difficult.
	
	\begin{figure}[t]
		\centering
		\includegraphics[width=.95\textwidth,trim={0 0 0 1cm},clip]{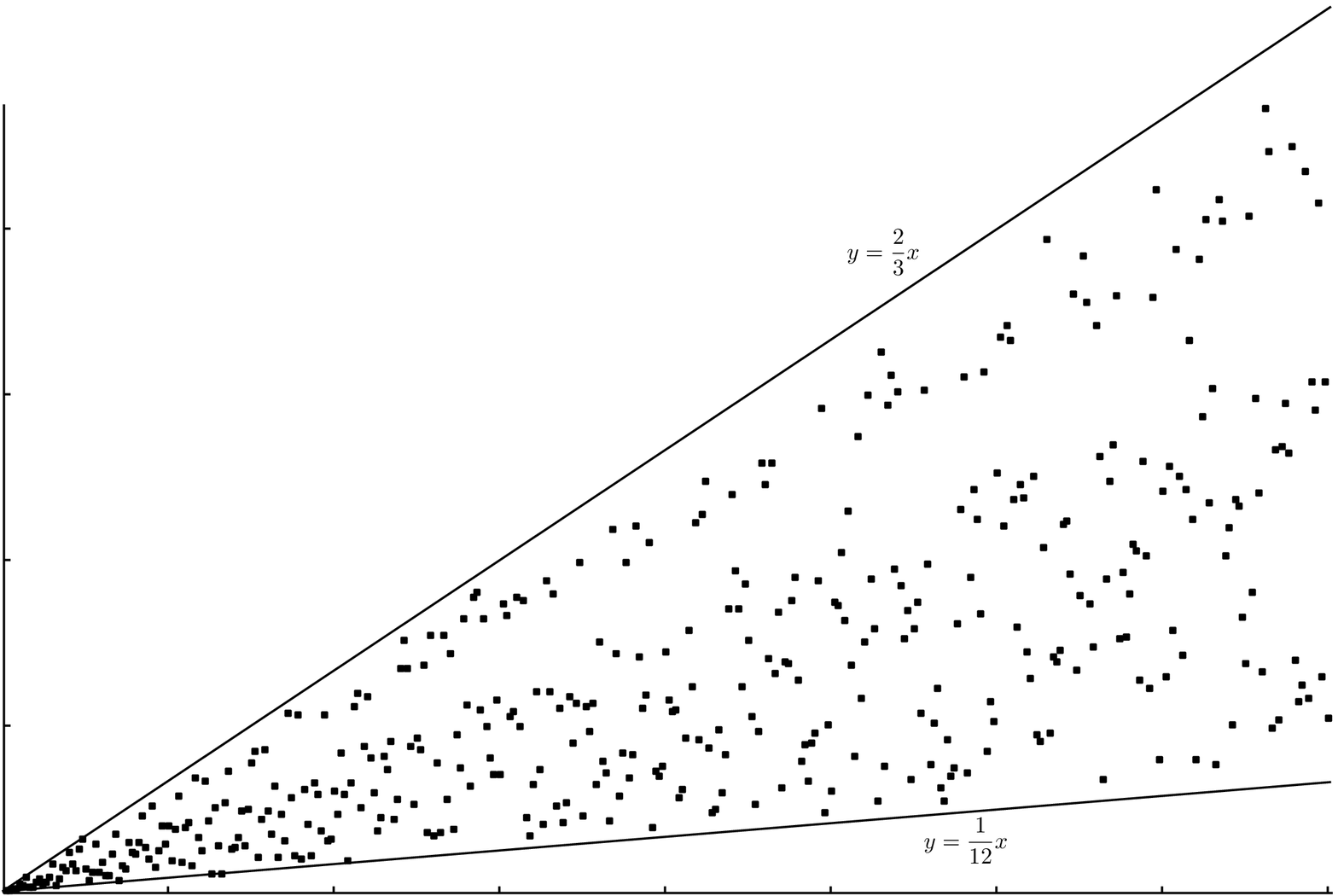}
		\caption{Construction $S_{\mathrm{mod2lex}<}$ for $n=800$}\label{fig:Smod2lex}
	\end{figure}
	
	\begin{conj} $S_{\mathrm{lex<}}$ and $S_{\mathrm{mod2lex<}}$ provide a point set such that the number of points in each square $[1,n]\times [1,n]$ can be bounded from below by a linear function.
	\end{conj}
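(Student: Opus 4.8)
The plan is to prove the conjecture in the stronger local form: for every sufficiently large $i$, a positive fraction of the columns $x=1,2,\dots,i$ receive a point of $S_{\mathrm{lex<}}$ (and similarly for $S_{\mathrm{mod2lex<}}$), since that immediately gives $|S\cap[1,n]^2|=\Omega(n)$. The first step is a reduction to a counting statement about a single column. When the construction processes the line $x=i$, it fails to place a point exactly when every candidate $(i,j)$ with $1\le j<i$ is \emph{blocked}, i.e.\ collinear with two points already placed in $[1,i-1]^2$. Because at most one point is placed per column, the previously placed points have pairwise distinct $x$-coordinates, so every pair of them spans a unique non-vertical line, which meets $x=i$ in exactly one point; a candidate $(i,j)$ is blocked precisely when this intersection point is a lattice point with $y$-coordinate in $[1,i-1]$. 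Thus the conjecture reduces to: for a positive-density set of $i$ (or all large $i$), the number of \emph{distinct} blocked lattice points in $\{i\}\times[1,i-1]$ is at most $(1-\delta)(i-1)$ for some absolute $\delta>0$.

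The crux is this counting bound, and it is genuinely delicate. The elementary estimate --- if $k$ points have been placed, there are at most $\binom{k}{2}$ spanned lines, so a point can be placed whenever $\binom{k}{2}<i-1$ --- only propagates to $k\approx\sqrt{2i}$ and hence yields the trivial bound $|S\cap[1,n]^2|=\Omega(\sqrt n)$, matching the parabola construction and nothing more. To reach $\Omega(n)$ one must show that the family of spanned lines, restricted to the valid lattice points of column $i$, is \emph{very far from injective}. I would try to extract this from two sources of wasted blocking power: (a) a spanned line through two points whose $x$-coordinates are both at most $i/2$ and whose slope exceeds an absolute constant in absolute value exits $[1,i]^2$ before reaching $x=i$, so only ``short and flat'' chords among the left-hand points matter; and (b) a short flat chord, when extended to column $i$, passes through (or near) many lattice points of $[1,i]^2$, all of which it blocks at once, so the map from pairs to blocked points collapses heavily. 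Quantifying (a) would lean on the experimentally observed near-equidistribution of the placed points in $[1,i]^2$ (most chords are indeed short and flat), and (b) would require a careful accounting of the overlaps between these extended chords.

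The main obstacle --- and the reason this is stated as a conjecture rather than a theorem --- is precisely that the distribution of the already-placed set is defined self-referentially by the greedy rule, so one cannot simply feed in an equidistribution hypothesis; one needs either a clean monovariant/amortization that survives the induction, or an intrinsic structural description of the greedy set strong enough to control the lattice geometry of all the lines it spans. A purely amortized version (``if only $\delta n$ points are placed in columns $[1,n]$, derive a contradiction'') does not obviously help, since few placed points means few spanned lines and hence an easy placement --- the adversarial case is when many points have been placed, which is already the desired outcome, so the difficulty is entirely in the boundary regime where $k_i$ hovers near $\sqrt{2i}$.

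I would attack the $S_{\mathrm{mod2lex<}}$ case first, where there is more slack: the density target is only $\tfrac12 n$, the construction has the extra freedom of using only even columns, and the empirical bound $j<\tfrac23 i$ on the placed height suggests maintaining an explicit invariant --- for instance, that after processing column $2i$ the placed points meet only a bounded fraction of some structured collection of low residue classes --- which one might hope to propagate inductively. Even there, the essential task is to rule out a conspiracy in which the extended spanned chords eventually cover every low lattice point of some future column; making that impossibility rigorous is, I expect, where the real work lies.
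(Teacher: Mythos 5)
This statement is left as an open conjecture in the paper: the authors offer only computational evidence (Table~\ref{tab:table_1} and Figures~\ref{fig:Slex}--\ref{fig:Smod2lex}) and explicitly remark that rigorous analysis of this greedy approach appears difficult. Your text is likewise not a proof but a research plan, and its central step is missing. After your (correct) reduction, everything hinges on the counting bound that for positive-density many $i$ the distinct blocked lattice points in $\{i\}\times[1,i-1]$ number at most $(1-\delta)(i-1)$; this is just a restatement of the conjecture, and you never establish it. The two mechanisms you propose do not close the gap: (a) discarding steep chords with both endpoints in the left half is a true but trivial saving, nowhere near enough, since with $k\approx\delta i$ placed points there are $\Theta(i^2)$ spanned lines against only $i-1$ candidate positions, so one needs the pair-to-blocked-point map to be roughly $i$-to-one on average; and (b) as written is not correct as a source of collapse, because a non-vertical line meets the column $x=i$ in exactly one point, so each chord blocks at most one candidate in that column --- the collapse you need is that many \emph{different} chords hit the \emph{same} lattice point of column $i$, and nothing in your sketch forces that. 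Moreover, you propose to quantify (a) using the ``experimentally observed near-equidistribution'' of the placed points, which is precisely the self-referential input you yourself identify as unavailable; flagging the obstruction does not remove it.

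So the status is: no proof in the paper, no proof in your proposal, and the concrete missing ingredient is any rigorous structural control of the greedy set (an invariant or monovariant that survives the induction) strong enough to show the spanned lines cannot cover all of $\{i\}\times[1,i-1]$ for a positive density of columns. Your instinct to attack $S_{\mathrm{mod2lex<}}$ first, where the empirical slack ($j<\tfrac{2}{3}i$, only even columns) might support an explicit propagating invariant, is consistent with the authors' own suggestion that this variant may be more amenable to analysis, but until such an invariant is actually formulated and proved to propagate, the argument does not get past the $\Omega(\sqrt{n})$ barrier you already note.
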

	
	The consistent behavior of $S_{\mathrm{mod2lex<}}$ suggests that it might be more amenable to rigorous analysis than other greedy constructions.
	

	\paragraph{Acknowledgement.} 
	The paper resulted from the 11th Emléktábla Workshop, where Dömötör Pálvölgyi made us aware of the problem.   We are grateful to the organisers of the workshop.  We also thank Joshua Erde for first proposing the problem.
	
	\bibliography{main}
	\bibliographystyle{amsplain}

\end{document}